\title[Dolbeault type cohomology groups ]
{Dolbeault type cohomology groups of Infinitesimal deformations and Their Applications}
\author[Jiezhu Lin]{Jiezhu Lin}
\author[Xuanming Ye]{Xuanming Ye}
\address{School of Mathematics and Information Science, Guangzhou University, Key Laboratory of Mathematics and Interdisciplinary Sciences of Guangdong Higher Education Institutes, No 230, Waihuan Road West, Guangzhou, 510006, P.R.China}
\address{School of Mathematics and Information Science, Guangzhou University, Key Laboratory of Mathematics and Interdisciplinary Sciences of Guangdong Higher Education Institutes, No 230, Waihuan Road West, Guangzhou, 510006, P.R.China}
\email{jlin@gzhu.edu.cn}
\email{yexm3@gzhu.edu.cn}
\begin{document}
\theoremstyle{plain}
\newtheorem{thm}{Theorem}[section]
\newtheorem{theorem}[thm]{Theorem}
\newtheorem{lemma}[thm]{Lemma}
\newtheorem{corollary}[thm]{Corollary}
\newtheorem{proposition}[thm]{Proposition}
\newtheorem{addendum}[thm]{Addendum}
\newtheorem{variant}[thm]{Variant}
\theoremstyle{definition}
\newtheorem{construction}[thm]{Construction}
\newtheorem{notations}[thm]{Notations}
\newtheorem{question}[thm]{Question}
\newtheorem{problem}[thm]{Problem}
\newtheorem{remark}[thm]{Remark}
\newtheorem{remarks}[thm]{Remarks}
\newtheorem{definition}[thm]{Definition}
\newtheorem{claim}[thm]{Claim}
\newtheorem{assumption}[thm]{Assumption}
\newtheorem{assumptions}[thm]{Assumptions}
\newtheorem{properties}[thm]{Properties}
\newtheorem{example}[thm]{Example}
\newtheorem{conjecture}[thm]{Conjecture}
\numberwithin{equation}{section}

\newcommand{\sA}{{\mathcal A}}
\newcommand{\sB}{{\mathcal B}}
\newcommand{\sC}{{\mathcal C}}
\newcommand{\sD}{{\mathcal D}}
\newcommand{\sE}{{\mathcal E}}
\newcommand{\sF}{{\mathcal F}}
\newcommand{\sG}{{\mathcal G}}
\newcommand{\sH}{{\mathcal H}}
\newcommand{\sI}{{\mathcal I}}
\newcommand{\sJ}{{\mathcal J}}
\newcommand{\sK}{{\mathcal K}}
\newcommand{\sL}{{\mathcal L}}
\newcommand{\sM}{{\mathcal M}}
\newcommand{\sN}{{\mathcal N}}
\newcommand{\sO}{{\mathcal O}}
\newcommand{\sP}{{\mathcal P}}
\newcommand{\sQ}{{\mathcal Q}}
\newcommand{\sR}{{\mathcal R}}
\newcommand{\sS}{{\mathcal S}}
\newcommand{\sT}{{\mathcal T}}
\newcommand{\sU}{{\mathcal U}}
\newcommand{\sV}{{\mathcal V}}
\newcommand{\sW}{{\mathcal W}}
\newcommand{\sX}{{\mathcal X}}
\newcommand{\sY}{{\mathcal Y}}
\newcommand{\sZ}{{\mathcal Z}}
\newcommand{\A}{{\mathbb A}}
\newcommand{\B}{{\mathbb B}}
\newcommand{\C}{{\mathbb C}}
\newcommand{\D}{{\mathbb D}}
\newcommand{\E}{{\mathbb E}}
\newcommand{\F}{{\mathbb F}}
\newcommand{\G}{{\mathbb G}}
\newcommand{\HH}{{\mathbb H}}
\newcommand{\I}{{\mathbb I}}
\newcommand{\J}{{\mathbb J}}
\renewcommand{\L}{{\mathbb L}}
\newcommand{\M}{{\mathbb M}}
\newcommand{\N}{{\mathbb N}}
\renewcommand{\P}{{\mathbb P}}
\newcommand{\Q}{{\mathbb Q}}
\newcommand{\R}{{\mathbb R}}
\newcommand{\SSS}{{\mathbb S}}
\newcommand{\T}{{\mathbb T}}
\newcommand{\U}{{\mathbb U}}
\newcommand{\V}{{\mathbb V}}
\newcommand{\W}{{\mathbb W}}
\newcommand{\X}{{\mathbb X}}
\newcommand{\Y}{{\mathbb Y}}
\newcommand{\Z}{{\mathbb Z}}
\newcommand{\id}{{\rm id}}
\newcommand{\rank}{{\rm rank}}
\newcommand{\END}{{\mathbb E}{\rm nd}}
\newcommand{\End}{{\rm End}}
\newcommand{\Eis}{{\rm Eis}}
\newcommand{\Hg}{{\rm Hg}}
\newcommand{\tr}{{\rm tr}}
\newcommand{\Sl}{{\rm Sl}}
\newcommand{\Gl}{{\rm Gl}}
\newcommand{\Gr}{{\rm Gr}}
\newcommand{\Cor}{{\rm Cor}}
\newcommand{\dcechg}{\delta\!\!\!\check{\delta}}
\newcommand{\del}{\partial}
\newcommand{\delbar}{\overline{\del}}
\newcommand{\tempo}{\mathbf{t}}
\newcommand\SO{{\rm SO}}
\newcommand\SU{{\rm SU}}
\newcommand\fra{{\mathfrak a}} 
\newcommand\frg{{\mathfrak g}}
\newcommand\frh{{\mathfrak h}}
\newcommand\frk{{\mathfrak K}}
\newcommand\frn{{\mathfrak n}}
\newcommand\frs{{\mathfrak s}}
\newcommand\frt{{\mathfrak t}}
\newcommand\gc{\frg_\mathbb{C}}
\newcommand\Real{{\mathfrak R}{\frak e}\,} 
\newcommand\Imag{{\mathfrak I}{\frak m}\,}
\newcommand\nilm{\Gamma\backslash G}
\newcommand\Mod{{\cal M}}
\newcommand\db{{\bar{\partial}}}
\newcommand\zzz{{\!\!\!}}
\newcommand{\la}{\langle}
\newcommand{\sla}{\slash\!\!\!}
\newcommand{\opiccolo}[1]{\mathrm{o}\left(\left|#1\right|\right)}
\newcommand{\ra}{\rangle}
\newcommand\sqi{{\sqrt{-1\,}}}

\maketitle


\begin{abstract}\ \rm In this paper, we establish a kind of Dolbeault type cohomology groups for the purpose of
studying the varying of complex structure invariants in infinitesimal deformations of any order. We give a concrete description
of the higher order Kodaria-Spencer maps by using these cohomology groups. We reformulate the obstruction formulas within
the framework of these cohomology groups and give a new proof for the formulas.\\
\vskip10pt
\noindent
{\bf Key words:} deformations, Dolbeault cohomology, Kodaria-Spencer map, jumping phenomenon, obstructions.
\vskip10pt
\noindent
{\bf MSC~Classification (2020):} 32G05, 32L10, 55N30, 32G99
\end{abstract}

\section{Introduction}
\renewcommand{\theequation}
{1.\arabic{equation}} \setcounter{equation}{-1}
\indent
In the classic deformation theory of complex structures, there are two different approaches.
One of them is a more algebraic style, using algebraic methods. The other is the style of analysis, using more complex analysis tools. The former usually focuses on studying the $n$th order neighborhoods of complex structure deformations. In this case, when
we study the cohomology of the locally free sheaves over $n$th order neighborhoods, besides the $\check{C}$ech cohomology groups there are seldom  ways to do the calculations. In this paper we establish a kind of Dolbeault type cohomology groups to overcome this difficulty. Firstly, we carefully selected a structure sheaf as the basis for our approach. More precisely, let $X\xrightarrow{i}\mathcal{X} \xrightarrow{\pi}B$ be a small deformation of $X$, we take a sheaf $\mathcal{C}^{\omega\infty}_\mathcal{X}$ of, roughly speaking, $\mathcal{C}^{\infty}$-functions on $\mathcal{X}$ which are real analytic in direction of $B$ as the structure sheaf. Any holomorphic vector bundle $ pr:E \rightarrow \mathcal{X}$ over $\mathcal{X}$ can be considered as  a locally free sheaf of $\sO_\sX^{\omega}$-module. In this paper,  we construct a $\Gamma$-acyclic resolution \ref{resolution2} of its restriction to $X^{\omega}_{\infty}$, from which we obstain a Dolbeault type cohomology groups to compute the cohomology groups of this sheaf
$$
H^p\left( X_{\infty}^{\omega},\mathcal{E} _{\mathcal{X} |X_{\infty}^{\omega}} \right) \cong H_{\bar{\partial}_{\sX/B}}^{p}\left( \mathcal{E} _{\mathcal{X} |X_{\infty}^{\omega}} \right).
$$
We also discuss the case of its restriction to $X^{\omega}_n$ and a $\Gamma$-acyclic resolution \ref{resolution1} of it and similarly we also get a Dolbeault type cohomology groups to compute the cohomology groups
$$
H^p\left( X_{n}^{\omega},\mathcal{E} _{\mathcal{X} |X_{n}^{\omega}} \right) \cong H_{\bar{\partial}_{X_n/B_n}}^{p}\left( \mathcal{E} _{\mathcal{X} |X_{n}^{\omega}} \right)
.$$
\indent After the construction of the Dolbeault type cohomology groups, we obtain a concrete description of Kodaira-Spencer map of $n$th order by using the cohomology class in the cohomolgy groups $H_{\bar{\partial}_{\sX/B}}^{p}( \mathcal{T} _{\mathcal{X}/B|X_{\infty}^{\omega}}) $  and $H_{\bar{\partial}_{X_n/B_n}}^{p}( \mathcal{T} _{\mathcal{X}/B|X_{n}^{\omega}}) $. These cohomology classes are described by using the Beltrami differential $\varphi(t)$ in theorem \ref{main2}
\begin{align*}
\kappa : \Gamma \left( B,\mathcal{T} _{B^{\omega}} \right)& \longrightarrow   H^1\left(  X_{\infty}^{\omega},\mathcal{T} _{X_{\infty}^{\omega}/B^{\omega}} \right)&
\\
       u & \longmapsto  [\rho d_u\left( \varphi \left( t \right) \right)]&
\end{align*}
and theorem \ref{main3}
\begin{align*}
\kappa_n:
\mathcal{T} _{B_{n}^{\omega}|B_{n-1}^{\omega}} & \longrightarrow H^1\left( X_{n-1}^{\omega},\mathcal{T} _{X_{n}^{\omega}/B^{\omega}_{n-1}} \right) &
\\
       u & \longmapsto  [r_{n-1}(\rho d_{\tilde{u}}\left( \varphi \left( t \right) \right))].&
\end{align*}
\indent As an application of the above discussion, we reformulate the obstruction formula in theorem 3.5  in \cite{YEH} and theorem 3.4  in \cite{YET},
that is theorem \ref{omain} and respectively theorem \ref{omain2} in last section. And  new proofs of these theorems are given by using the analytic tools at the end of this paper.

\section{Dolbeault type cohomology groups of infinitesimal deformations}\label{section2}
 \renewcommand{\theequation}
{2.\arabic{equation}} \setcounter{equation}{-1}
\subsection{Some Notations. } \ \ \\
\indent Here we make some definitions in order to construct the Dolbeault type cohomology groups of infinitesimal deformations.
Let $\pi:\mathcal{X}\longrightarrow B$ be a deformation of
$\pi^{-1}(0)=X$, where $X$ is a compact complex manifold of dimension $m$ and $i:X \longrightarrow \mathcal{X}$ be the inclusion map. Let $ pr:E \longrightarrow \mathcal{X}$ be a holomorphic vector bundle over $\mathcal{X}$ and there exists a diffeomorphism $E\cong E_0 \times \mathcal{X}$ where $E_0= E_{|X}$. Let $\mathcal{E}_{\mathcal{X}}$ be the locally free sheaf of $\mathcal{O}_{\mathcal{X}}$-module associated to $E$. For
every integer $n\geq 0$, denote by
$B_{n}=Spec\,\mathcal{O}_{B,0}/m_{0}^{n+1}$ the $n$th order
infinitesimal neighborhood of the closed point $0\in B$ of the
base $B$. Let $X_{n}\subset \mathcal{X}$ be the complex space over
$B_{n}$. Let $\pi_{n}:X_{n}\longrightarrow B_{n}$ be the $n$th order
deformation of $X$.  \\
\indent Let $\sC^{\omega}_B$ be the sheaf of
$\C$-valued real analytic functions on $B$ and $B^{\omega}$ be the complex space which is the topology space $B$ equipped with the structure sheaf  $\sC^{\omega}_B$. 
Let $m^{\omega}_{0}$ be the maximal ideal of $\sC^{\omega}_{B,0}$.
 Let ${\sC}_{B_n}^{\omega}= \sC_{B,0}^{\omega}/ ({m}_{0}^{\omega})^{n+1}$£¬and $B^{\omega}_n$ be the complex space
which is the topology space $B$ being equipped with the structure sheaf ${\sC}_{B_n}^{\omega}$.
Denote $\sO_\sX^{\omega}= \pi^{*}\sC^{\omega}_B$ and
$\mathcal{M}_{0}^{\omega}= \pi^{*}m^{\omega}_{0} $. Let ${\sO}_{X_n}^{\omega}= \sO_{\sX}^{\omega}/ ({\mathcal{M}}_{0}^{\omega})^{n+1}$£¬and $X^{\omega}_n$(resp. $X^{\omega}_{\infty}$) be the complex space
which is the topology space $X$ being equipped with the structure sheaf ${\sO}_{X_n}^{\omega}$(resp. $i^{-1}\sO_{\sX}^{\omega}$). Denote $\mathcal{E}_{\mathcal{X}|X^{\omega}_n}$ the sheaf $i^{-1} \mathcal{E}_{\mathcal{X}} \otimes_{\sO_{X_n}} {\sO}_{X_n}^{\omega}$ over $X^{\omega}_n$ and  $\mathcal{E}_{\mathcal{X}|X^{\omega}_{\infty}}$ the sheaf $i^{-1} \mathcal{E}_{\mathcal{X}} \otimes_{i^{-1}\sO_{\sX}} i^{-1}{\sO}_{\sX}^{\omega}$ over $X^{\omega}_{\infty}$.
In the following we want to construct Dolbeault type cohomology groups which are isomorphic to $H^{\cdot}(X^{\omega}_n,\mathcal{E}_{\mathcal{X}|X_n^{\omega}})$ and $H^{\cdot}(X^{\omega}_{\infty},\mathcal{E}_{\mathcal{X}|X_{\infty}^{\omega}})$. \ \ \\
\indent  In order to define Dolbeault type cohomolgy groups, one has to consider the sheaves of $\mathcal{C}^{\infty}$-module over the fibers and therefore the object with a mixture of $\mathcal{C}^{\infty}$  and real analytic data  is  a very suitable for us to study. Let $\mathcal{C}^{\omega\infty}_\mathcal{X}$ be the sheaf of $\mathcal{C}^{\infty}$-functions on $\mathcal{X}$ which are real analytic in direction of $B$. More precisely, according to a complex version of the theorem of Ehresmann, there exists a diffeomorphism(\cite{Voi02I}, Proposition 9.5)  $$ T=(T_0,\pi):\mathcal{X} \cong X_0\times B$$ having the following property: that the fibres of $T_0$ are complex submanifolds of $\mathcal{X}$.
Without ambiguity, we can talk about the $\mathcal{C}^{\infty}$-functions on $X_0 \times B$ which are real analytic in direction of $B$, and denote the sheaf of these functions by $\mathcal{C}^{\omega\infty}_{X_0\times B}$ and $\mathcal{C}^{\omega\infty}_\mathcal{X}$ is defined to be $T^{-1}\mathcal{C}^{\omega\infty}_{X_0\times B}$ .
\begin{remark}
An equivalent definition of the sheaf $\mathcal{C}^{\omega\infty}_\mathcal{X}$ is that it is the sheaf of $\mathcal{C}^{\infty}$ functions whose restriction on every fibres of $T_0$ are  real analytic functions. Note that both definitions depend on the choice of the diffeomorphism $T$, so a more precise notation for this sheaf should be $\mathcal{C}^{\omega\infty,T}_\mathcal{X}$. However, the diffeomorphism $T$ will be fixed in rest of our discussion, so for simplicity, we used $\mathcal{C}^{\omega\infty}_\mathcal{X}$ to denote it.
\end{remark}
 Then we make some definitions of sheaves over $\mathcal{X}$.\\
\textbullet \quad Let $\mathcal{M}_0^{\omega\infty}=\pi^{*}m_0\otimes_{\mathcal{O}_\mathcal{X}}\sC^{\omega\infty}_{\sX}$ , $\sE^{\omega\infty}_{\sX}=\sE_{\sX}\otimes_{\sO_{\sX}}\sC^{\omega\infty}_{\sX}$, $\sA^{\cdot,0}_{\sX/B}=\Omega^{\cdot}_{\sX/B}\otimes_{\sO_{\sX}}\sC^{\omega\infty}_{\sX}$, $\sA^{\cdot,0}_{\sX}=\Omega^{\cdot}_{\sX}\otimes_{\sO_{\sX}}\sC^{\omega\infty}_{\sX}$,  $\sA^{0,\cdot}_{\sX/B}=\bar{\Omega}^{\cdot}_{\sX/B}\otimes_{\bar{\sO}_{\sX}}\sC^{\omega\infty}_{\sX}$,
$\sA^{0,\cdot}_{\sX}=\bar{\Omega}^{\cdot}_{\sX}\otimes_{\bar{\sO}_{\sX}}\sC^{\omega\infty}_{\sX}$,
 $\sA^{\cdot}_{\sX/B}(\sE_{\sX})=\sA^{0,\cdot}_{\sX/B}\otimes_{\sC^{\omega\infty}_{\sX}}\sE^{\omega\infty}_{\sX}$ and  $\sA^{\cdot}_{\sX}(\sE_{\sX})=\sA^{0,\cdot}_{\sX}\otimes_{\sC^{\omega\infty}_{\sX}}\sE^{\omega\infty}_{\sX}$. \\
\indent Definitions of sheaves over $X_n$ is also needed. \\
 \textbullet \quad Let $\sC^{\omega\infty}_{X_n}$ be the quotient sheaf of $i^{-1}\sC^{\omega\infty}_{\sX}$ by $(i^{-1}\sM^{\omega\infty}_{0})^n$,
 $\mathcal{E}^{\omega\infty}_{X_n}=i^{-1}\sE^{\omega\infty}_{\sX} \otimes_{i^{-1}\sC^{\omega\infty}_{\sX}}\sC^{\omega\infty}_{X_n}$, $\sA^{\cdot,0}_{X_n/B_n}= \Omega^{\cdot}_{X_n/B_n} \otimes_{\sO_{X_n}}\sC^{\omega\infty}_{X_n}$, $\sA^{0,\cdot}_{X_n/B_n}= \bar{\Omega}^{\cdot}_{X_n/B_n} \otimes_{\bar{\sO}_{X_n}}\sC^{\omega\infty}_{X_n}$(the map $\sO_{X_n} \longrightarrow \sC^{\omega\infty}_{X_n}$ is just  the quotient of $i^{-1}\sO_{\sX} \longrightarrow i^{-1}\sC^{\omega\infty}_{\sX}$),
 and $\sA^{\cdot}_{X_n/B_n}(\sE^{\omega\infty}_{X_n})= \sA^{0,\cdot}_{X_n/B_n}\otimes_{\sC^{\omega\infty}_{X_n}}\sE^{\omega\infty}_{X_n}$.\\
 \indent At the end we need to construct the Dolbeault type Operators in the following. Note that the holomorphic structure of $E$ induce Dolbeault operators
 $$ \bar{\partial}_{\sX}: \sA^{p}_{\sX}(\sE_{\sX}) \longrightarrow \sA^{p+1}_{\sX}(\sE_{\sX}). $$
 From the short exact sequence of sheaves over $\sX$,
 $$ 0 \longrightarrow (\pi^{*}\bar{\Omega}_B \otimes_{\bar{\sO}_{\sX}} \sC^{\omega\infty}_{\sX})\wedge_{\sC^{\omega\infty}_{\sX}} \sA^{0,p}_{\sX} \longrightarrow \sA^{0,p+1}_{\sX} \longrightarrow \sA^{0,p+1}_{\sX/B} \longrightarrow 0.$$
  for all $p\geq 0$,
  we get operators
  $$ \bar{\partial}_{\sX/B}: \sA^{p}_{\sX/B}(\sE_{\sX}) \longrightarrow \sA^{p+1}_{\sX/B}(\sE_{\sX}). $$ by simply taking the quotient of $ \bar{\partial}_{\sX}$. Then easy computation shows that the operator $\bar{\partial}_{\sX/B}$ is $\pi^{-1}\sC^{\omega}_B$-linear. So we have
   $$ \bar{\partial}_{\sX/B}( \sA^{p}_{\sX/B}(\sE_{\sX}) \otimes_{\sC^{\omega\infty}_{\sX}} (\mathcal{M}_0^{\omega\infty})^n)\subset  \sA^{p+1}_{\sX/B}(\sE_{\sX}) \otimes_{\sC^{\omega\infty}_{\sX}} (\mathcal{M}_0^{\omega\infty})^n.$$
Which allow us to construct Dolbeault type opeartors
 $$ \bar{\partial}_{X_n/B_n}:\sA^{p}_{X_n/B_n}(\sE^{\omega\infty}_{X_n}) \longrightarrow \sA^{p+1}_{X_n/B_n}(\sE^{\omega\infty}_{X_n})$$
 by simply taking the quotient of $\bar{\partial}_{\sX/B}$.
 \subsection{Definitions and Isomorphisms. } \ \ \\
 In order to finish the construction, we also need the following two lemmas.
 \begin{lemma} \label{soft lemma}
 We have $H^{k}(X_n,\sA^{p}_{X_n/B_n}(\sE^{\omega\infty}_{X_n}))=0, \forall k>0$ .
 \end{lemma}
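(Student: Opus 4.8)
The goal is to show that the sheaf $\sA^{p}_{X_n/B_n}(\sE^{\omega\infty}_{X_n})$ is acyclic on $X_n$ (equivalently on the topological space $X$), for all $p$, in all positive degrees. The plan is to reduce this to the classical fact that fine (or soft) sheaves of $\mathcal{C}^\infty$-modules on a paracompact manifold are acyclic, by exhibiting $\sA^{p}_{X_n/B_n}(\sE^{\omega\infty}_{X_n})$ as a module over a suitable sheaf of rings that admits partitions of unity.

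\medskip

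\noindent\textbf{Step 1: Identify the ring of scalars.} The sheaf $\sC^{\omega\infty}_{X_n}$ is, by construction, a quotient of $i^{-1}\sC^{\omega\infty}_{\sX}$, and over the topological manifold $X$ the latter is a sheaf of $\mathcal{C}^\infty$-functions on $X_0\times B$ (real-analytic in the $B$-direction) restricted to the central fibre — in particular it contains the pullback of $\mathcal{C}^\infty_X$ under the projection $X_0\times B\to X_0$, hence admits $\mathcal{C}^\infty$ partitions of unity subordinate to any open cover of $X$. Passing to the quotient by $(i^{-1}\sM^{\omega\infty}_0)^n$ preserves this: a partition of unity $\{\rho_\alpha\}$ in $\mathcal{C}^\infty_X\subset\sC^{\omega\infty}_{X_n}$ still satisfies $\sum\rho_\alpha=1$ and is locally finite. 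Thus $\sC^{\omega\infty}_{X_n}$ is a \emph{soft} (indeed fine) sheaf of rings on $X$.

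\medskip

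\noindent\textbf{Step 2: Realize $\sA^{p}_{X_n/B_n}(\sE^{\omega\infty}_{X_n})$ as a sheaf of $\sC^{\omega\infty}_{X_n}$-modules.} By definition $\sA^{p}_{X_n/B_n}(\sE^{\omega\infty}_{X_n})=\sA^{0,p}_{X_n/B_n}\otimes_{\sC^{\omega\infty}_{X_n}}\sE^{\omega\infty}_{X_n}$, where $\sA^{0,p}_{X_n/B_n}=\bar\Omega^{p}_{X_n/B_n}\otimes_{\bar\sO_{X_n}}\sC^{\omega\infty}_{X_n}$; both tensor factors are, locally on $X$, free $\sC^{\omega\infty}_{X_n}$-modules of finite rank (the rank of $\bar\Omega^p$ being $\binom{m}{p}$ and the rank of $\sE^{\omega\infty}_{X_n}$ being $\rank E$, each tensored with the fixed-length Artinian ring $\sC^{\omega}_{B_n}$ in the $B$-direction). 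Hence $\sA^{p}_{X_n/B_n}(\sE^{\omega\infty}_{X_n})$ is itself a sheaf of $\sC^{\omega\infty}_{X_n}$-modules, and the multiplication operators by the $\rho_\alpha$ from Step 1 act on it.

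\medskip

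\noindent\textbf{Step 3: Softness implies acyclicity.} A sheaf of modules over a soft sheaf of rings on a paracompact Hausdorff space is soft, and soft sheaves are $\Gamma$-acyclic; this is the standard argument (e.g. via the partition-of-unity splitting of the sheaf into pieces supported in a locally finite cover, cf. Godement or the treatment in \cite{Voi02I}). Applying this with the soft ring sheaf $\sC^{\omega\infty}_{X_n}$ and the module sheaf $\sA^{p}_{X_n/B_n}(\sE^{\omega\infty}_{X_n})$ yields $H^{k}(X_n,\sA^{p}_{X_n/B_n}(\sE^{\omega\infty}_{X_n}))=H^k(X,\sA^{p}_{X_n/B_n}(\sE^{\omega\infty}_{X_n}))=0$ for all $k>0$, as claimed.

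\medskip

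\noindent I expect the only genuine point requiring care — the main obstacle — to be Step 1: one must check that the real-analytic constraint in the $B$-direction does not destroy the existence of partitions of unity. The resolution is exactly that partitions of unity are needed only in the $X_0$-directions, where no analyticity is imposed, so one takes $\{\rho_\alpha\}$ pulled back from $\mathcal{C}^\infty_{X_0}$; these are genuine sections of $\sC^{\omega\infty}_{\sX}$ (constant, hence trivially real-analytic, along $B$) and descend to $\sC^{\omega\infty}_{X_n}$. Everything else is the routine soft-sheaf machinery.
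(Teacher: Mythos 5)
Your proof is correct, and it rests on exactly the same key observation as the paper's — that $\mathcal{C}^{\infty}_X$ sits inside $i^{-1}\sC^{\omega\infty}_{\sX}$, so partitions of unity can be taken purely in the fibre directions where no analyticity is imposed — but the mechanics are genuinely different. The paper never asserts softness of the quotient sheaf $\sA^{p}_{X_n/B_n}(\sE^{\omega\infty}_{X_n})$ directly; instead it presents that sheaf via the short exact sequence
$$0 \longrightarrow i^{-1}\sA^{p}_{\sX/B}(\sE_{\sX}) \otimes_{i^{-1}\sC^{\omega\infty}_{\sX}} (i^{-1}\mathcal{M}_0^{\omega\infty})^n \longrightarrow i^{-1}\sA^{p}_{\sX/B}(\sE_{\sX}) \xrightarrow{\ r_n\ } \sA^{p}_{X_n/B_n}(\sE^{\omega\infty}_{X_n}) \longrightarrow 0,$$
shows the two left-hand sheaves are fine as $i^{-1}\sC^{\omega\infty}_{\sX}$-modules, and extracts the vanishing of $H^{k}$ of the quotient from the long exact sequence. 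You instead observe that the partition of unity survives passage to the quotient ring $\sC^{\omega\infty}_{X_n}$ (the sum still equals $1$ and supports only shrink), so $\sC^{\omega\infty}_{X_n}$ is itself a fine sheaf of rings and every sheaf of modules over it — in particular $\sA^{p}_{X_n/B_n}(\sE^{\omega\infty}_{X_n})$ — is soft, hence acyclic. Your route is more direct: it dispenses with the long exact sequence entirely and, as a bonus, establishes the stronger statement that the resolving sheaves in \ref{resolution1} are themselves fine rather than merely $\Gamma$-acyclic. The local-freeness claim in your Step 2 is harmless but unnecessary; softness of a module sheaf over a soft ring sheaf needs no finiteness or freeness hypothesis. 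The paper's route has the minor advantage of only ever invoking fineness for sheaves pulled back from $\sX$, where the module structure is the familiar one, at the cost of an extra homological step.
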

 \begin{proof} Note that we have the follow exact sequences of sheaves over $X$,
 $$ 0 \xrightarrow{} i^{-1}\sA^{p}_{\sX/B}(\sE_{\sX}) \otimes_{i^{-1}\sC^{\omega\infty}_{\sX}} (i^{-1}\mathcal{M}_0^{\omega\infty})^n \xrightarrow{} i^{-1}\sA^{p}_{\sX/B}(\sE_{\sX}) \xrightarrow{r_n} \sA^{p}_{X_n/B_n}(\sE^{\omega\infty}_{X_n}) \longrightarrow 0.$$
 Since $\sC^{\infty}_{X}$ is a sub-sheaf of $i^{-1}\sC^{\omega\infty}_{\sX}$,  $i^{-1}\sC^{\omega\infty}_{\sX}$ is a sheaf with the property of partition of unity. So as sheaves of $i^{-1}\sC^{\omega\infty}_{\sX}$-module we conclude that
 $i^{-1}\sA^{p}_{\sX/B}(\sE_{\sX}) \otimes_{i^{-1}\sC^{\omega\infty}_{\sX}} (i^{-1}\mathcal{M}_0^{\omega\infty})^n $ and $i^{-1}\sA^{p}_{\sX/B}(\sE_{\sX})$ are fine. Which means
 $H^{k}(X_n,i^{-1}\sA^{p}_{\sX/B}(\sE_{\sX}) \otimes_{i^{-1}\sC^{\omega\infty}_{\sX}} (i^{-1}\mathcal{M}_0^{\omega\infty})^n)=0, \forall k>0$  and $H^{k}(X_n,i^{-1}\sA^{p}_{\sX/B}(\sE_{\sX}))=0, \forall k>0$ . The proof is finished if we take the long exact sequences associated to the above short exact sequences.
 \end{proof}
 \indent The ordinary $\bar{\partial}$-Poincar\'{e} lemma that every local $\bar{\partial}$-closed form is $\bar{\partial}$-exact assures us that the Dolbeault cohomology groups are locally trivial. Analogously, a fundamental fact in our case is the following lemma.
\begin{lemma} ($\bar{\partial}_{X_n/B_n}$-Poincar\'{e} lemma) \label{Poincare lemma}
For any point $x \in X$, there exists an open neighborhood $U$ of $x$, such that any $\bar{\partial}_{X_n/B_n}$-closed sections of $\sA^{\cdot}_{X_n/B_n}(\sE^{\omega\infty}_{X_n})(U)$
is $\bar{\partial}_{X_n/B_n}$-exact.
\end{lemma}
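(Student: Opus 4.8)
The plan is to prove the lemma by reducing it, through the $\sM_0^{\omega\infty}$-adic filtration, to the classical Dolbeault--Grothendieck lemma on a polydisc, with the parameters coming from $B$ treated passively. First I would reduce to the trivial bundle: fix $x\in X$ and choose, on a small neighbourhood of $x$ in $\sX$, a holomorphic frame $e_1,\dots,e_r$ of $E$ ($r=\rank E$). Since the restriction of a frame holomorphic on $\sX$ is holomorphic on every fibre, in such a frame the relative operator $\bar{\partial}_{\sX/B}$, hence its quotient $\bar{\partial}_{X_n/B_n}$, acts componentwise with no connection term; so it suffices to treat the scalar operator $\bar{\partial}_{X_n/B_n}$ on $\sA^{\cdot}_{X_n/B_n}$, which I do by induction on $n$.

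For the inductive step, consider the short exact sequence of complexes of sheaves on $X$
\[
0\longrightarrow \sK^{\cdot}_n\longrightarrow \sA^{\cdot}_{X_n/B_n}(\sE^{\omega\infty}_{X_n})\longrightarrow \sA^{\cdot}_{X_{n-1}/B_{n-1}}(\sE^{\omega\infty}_{X_{n-1}})\longrightarrow 0,
\]
with the second map the natural restriction and $\sK^{\cdot}_n$ its kernel, i.e.\ the image of $(\sM_0^{\omega\infty})^{n}$. This is a sequence of \emph{complexes}, because $\bar{\partial}_{\sX/B}$ is $\pi^{-1}\sC^{\omega}_{B}$-linear and so preserves the filtration by powers of $\sM_0^{\omega\infty}$. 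Passing to the long exact sequence of cohomology sheaves, local exactness in positive degree for the index $n$ follows from that for the index $n-1$ and that for $\sK^{\cdot}_n$. For $\sK^{\cdot}_n$, since $(\sM_0^{\omega\infty})^{n+1}$ vanishes on $X_n$, multiplication by the degree-$n$ monomials $t^I$ in the coordinates of $B$ identifies $\sK^{\cdot}_n$ with a finite direct sum of copies of $(\sA^{\cdot}_{X_0/B_0},\bar{\partial}_{X_0/B_0})$: for an order-$n$ section one computes $\bar{\partial}_{X_n/B_n}(\sum_{|I|=n}t^I\alpha_I)=\sum_{|I|=n}t^I\bar{\partial}_{X_0/B_0}\alpha_I$, since everything in a lift $\bar{\partial}_{\sX/B}\tilde\alpha_I$ beyond its order-zero part lies in $\sM_0^{\omega\infty}$ and dies after multiplication by $t^I$. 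Hence the whole machine rests on the base case $n=0$.

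For $n=0$ one must show that $\bar{\partial}_{X_0/B_0}$ on $\sA^{\cdot}_{X_0/B_0}$ is locally exact in positive degree. In local coordinates $z$ on $X$ coming from the Ehresmann trivialization $T$ over a polydisc, $\bar{\partial}_{X_0/B_0}$ is just the usual $\bar{\partial}$ in the $z$-variables, acting on forms with coefficients that are $\sC^{\infty}$ in $z,\bar z$ and real-analytic in the remaining base parameters; being a relative operator it differentiates only in the fibre directions, so those parameters ride along passively. A $\bar{\partial}_{X_0/B_0}$-closed form of positive degree is therefore $\bar{\partial}_{X_0/B_0}$-exact on a possibly smaller polydisc by the classical Dolbeault--Grothendieck lemma applied with parameters: the standard solution of $\bar{\partial}$ on a polydisc is produced by an integral operator in the $z$-variables alone (iterated Cauchy transforms, or a Bochner--Martinelli--Koppelman homotopy), hence preserves both the $\sC^{\infty}$ dependence on $z,\bar z$ and the real-analytic dependence on the parameters.

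Unravelling the induction is just solving $\bar{\partial}_{X_n/B_n}\eta=\omega$ order by order in the $t$-adic filtration: solve the leading term by the parametrized Dolbeault--Grothendieck lemma, replace $\omega$ by $\omega-\bar{\partial}_{X_n/B_n}$ of the partial solution, and iterate; the process stops after $n+1$ steps because $(\sM_0^{\omega\infty})^{n+1}=0$ on $X_n$, and at each stage the leading term is $\bar{\partial}$-closed since the correction terms only raise the $t$-order. The one genuinely analytic point, and the step I expect to be the main obstacle, is the claim just used in the base case: that solutions of the $\bar{\partial}$-equation on a polydisc can be chosen to retain $\sC^{\infty}$ regularity in the fibre variables and real-analyticity in the base variables simultaneously. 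Everything else is bookkeeping with the $\sM_0^{\omega\infty}$-adic filtration, in the spirit of the proof of Lemma \ref{soft lemma}.
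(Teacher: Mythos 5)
Your proof is correct, but it is organized differently from the paper's. The paper works with a single short exact sequence of complexes of sections over a small neighborhood $U$,
\[
0\longrightarrow i^{-1}\sA^{\cdot}_{\sX/B}(\sE_{\sX})\otimes_{i^{-1}\sC^{\omega\infty}_{\sX}}(i^{-1}\sM_0^{\omega\infty})^{n}(U)\longrightarrow i^{-1}\sA^{\cdot}_{\sX/B}(\sE_{\sX})(U)\longrightarrow \sA^{\cdot}_{X_n/B_n}(\sE^{\omega\infty}_{X_n})(U)\longrightarrow 0,
\]
establishes exactness of the sub- and total complexes from the $\bar{\partial}$-Poincar\'e lemma together with the $\pi^{-1}\sC^{\omega}_B$-linearity of $\bar{\partial}_{\sX/B}$, and then reads off exactness of the quotient complex from the long exact cohomology sequence. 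You instead induct on $n$ along the $\sM_0^{\omega\infty}$-adic filtration, comparing consecutive finite orders and identifying the graded pieces with finite direct sums of the order-zero complex via multiplication by monomials in $t$, with the parametrized Dolbeault--Grothendieck lemma as base case. Both arguments rest on exactly the same analytic input --- that the standard solution operator for $\bar{\partial}$ on a polydisc, being an integral transform in the fibre variables only, preserves real-analytic dependence on the base parameters --- which the paper invokes implicitly when it asserts that the left and middle columns are exact ``by the $\bar{\partial}$-Poincar\'e lemma,'' and which you correctly isolate as the one genuinely analytic step. What your route buys is that this input is needed only for the order-zero complex (and finitely many copies of it in the graded pieces), rather than for the full relative complex $i^{-1}\sA^{\cdot}_{\sX/B}(\sE_{\sX})$ and its ideal-multiplied subcomplex; what it costs is the extra bookkeeping required to identify the associated graded pieces as free over the monomial basis (and you should watch the paper's indexing, where $\sC^{\omega\infty}_{X_n}$ is defined as the quotient by the $n$th rather than the $(n+1)$st power of $i^{-1}\sM_0^{\omega\infty}$, which shifts the degree of the monomials generating the kernel of the restriction map). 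Your reduction to a local holomorphic frame and the termination of the order-by-order iteration are both fine.
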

\begin{proof} Let $U$ be a small open neighborhood of $x$, note that we have the following short exact sequences of complexes of sections on $U$.
\begin{align*}
0\longrightarrow i^{-1}\mathcal{A} _{\mathcal{X} /B}^{0}(\mathcal{E} _{\mathcal{X}}) & \otimes _{i^{-1}\mathcal{C} _{\sX}^{\omega \infty}}&(i^{-1}\mathcal{M} _{0}^{\omega \infty})^n (U) \longrightarrow i^{-1}    & \mathcal{A}  _{\mathcal{X} /B}^{0}(\mathcal{E} _{\mathcal{X}})(U)&\longrightarrow  &\mathcal{A} _{X_n/B_n}^{0}(\mathcal{E} _{X_n}^{\omega \infty})(U)&\longrightarrow 0
\\
& \downarrow _{_{\bar{\partial}_{\mathcal{X} /B}}}  &           &   \downarrow _{_{\bar{\partial}_{\mathcal{X} /B}}} &       &\downarrow _{_{\bar{\partial}_{X_n/B_n}}}&
\\
0\longrightarrow i^{-1}\mathcal{A} _{\mathcal{X} /B}^{1}(\mathcal{E} _{\mathcal{X}}) & \otimes _{i^{-1}\mathcal{C} _{\sX}^{\omega \infty}}&(i^{-1}\mathcal{M} _{0}^{\omega \infty})^n(U)\longrightarrow i^{-1}&\mathcal{A} _{\mathcal{X} /B}^{1}(\mathcal{E} _{\mathcal{X}})(U)&\longrightarrow &\mathcal{A} _{X_n/B_n}^{1}(\mathcal{E} _{X_n}^{\omega \infty})(U)&\longrightarrow 0
\\
       &      \downarrow _{_{\bar{\partial}_{\mathcal{X} /B}}}  &              & \downarrow _{_{\bar{\partial}_{\mathcal{X} /B}}}    &       & \downarrow _{_{\bar{\partial}_{X_n/B_n}}}&
\\
          &  ...   &                &    ...     &          &   ...&
\\
       &  \downarrow _{_{\bar{\partial}_{\mathcal{X} /B}}}    &          & \downarrow _{_{\bar{\partial}_{\mathcal{X} /B}}}   &      &  \downarrow _{_{\bar{\partial}_{X_n/B_n}}}&
\\
0\longrightarrow i^{-1}\mathcal{A} _{\mathcal{X} /B}^{m}(\mathcal{E} _{\mathcal{X}})& \otimes _{i^{-1}\mathcal{C} _{\sX}^{\omega \infty}}&(i^{-1}\mathcal{M} _{0}^{\omega \infty})^n(U)\longrightarrow i^{-1}&\mathcal{A} _{\mathcal{X} /B}^{m}(\mathcal{E} _{\mathcal{X}})(U)&\longrightarrow &\mathcal{A} _{X_n/B_n}^{m}(\mathcal{E} _{X_n}^{\omega \infty})(U)&\longrightarrow 0
\\
       &  \downarrow _{_{\bar{\partial}_{\mathcal{X} /B}}}    &          & \downarrow _{_{\bar{\partial}_{\mathcal{X} /B}}}   &      &  \downarrow _{_{\bar{\partial}_{X_n/B_n}}}&
\\
          &  0  &                &    0     &          &   0&
\end{align*}
Then the exactness of complex in the middle column comes from the  $\bar{\partial}$-Poincar\'{e} lemma directly (one may modified $U$ to get the exactness if it is necessary). Together with the $\bar{\partial}$-Poincar\'{e} lemma and the $\pi^{-1}\sC^{\omega}_B$-linearity of $\bar{\partial}_{\sX/B}$ we get the exactness of the complex in the left column.
Then if we take the long sequence of cohomologies associate to the above short exact sequences, we obtain the exactness of the complex in the right column.
\end{proof}
By lemma \ref{soft lemma} and lemma \ref{Poincare lemma}, one can tell that the following complex of sheaves over $X_n^{\omega}$
\begin{align} \label{resolution1}
\begin{split}
&\mathcal{E} _{\mathcal{X} |X_{n}^{\omega}}\longrightarrow \mathcal{A} _{X_n/B_n}^{0}\left( \mathcal{E} _{X_n}^{\omega \infty} \right) \xrightarrow{\bar{\partial}_{X_n/B_n}}\mathcal{A} _{X_n/B_n}^{1}\left( \mathcal{E} _{X_n}^{\omega \infty} \right) \xrightarrow{\bar{\partial}_{X_n/B_n}}\mathcal{A} _{X_n/B_n}^{2}\left( \mathcal{E} _{X_n}^{\omega \infty} \right) \\
& \xrightarrow{\bar{\partial}_{X_n/B_n}}...\xrightarrow{\bar{\partial}_{X_n/B_n}}\mathcal{A} _{X_n/B_n}^{m}\left( \mathcal{E} _{X_n}^{\omega \infty} \right)
\end{split}
\end{align}
is a $\Gamma$-acyclic resolution of $\mathcal{E} _{\mathcal{X} |X_{n}^{\omega}}$ . Besides the following complex
\begin{align}  \label{resolution2}
\begin{split}
&\mathcal{E} _{\mathcal{X} |X_{\infty}^{\omega}}\longrightarrow i^{-1}\mathcal{A} _{\mathcal{X} /B}^{0}\left( \mathcal{E} _{\mathcal{X}} \right) \xrightarrow{\bar{\partial}_{\mathcal{X} /B}}i^{-1}\mathcal{A} _{\mathcal{X} /B}^{1}\left( \mathcal{E} _{\mathcal{X}} \right) \xrightarrow{\bar{\partial}_{\mathcal{X} /B}}i^{-1}\mathcal{A} _{\mathcal{X} /B}^{2}\left( \mathcal{E} _{\mathcal{X}} \right)
\\
&\xrightarrow{\bar{\partial}_{\mathcal{X} /B}}...\xrightarrow{\bar{\partial}_{\mathcal{X} /B}}i^{-1}\mathcal{A} _{\mathcal{X} /B}^{m}\left( \mathcal{E} _{\mathcal{X}} \right)
\end{split}
\end{align}

is also a $\Gamma$-acyclic resolution. Thus we obtain the following isomorphisms.
\begin{theorem} \label{Dol}
The Dolbeault type cohomology groups associated to $\mathcal{E}_{\mathcal{X}|X_{n}^{\omega}}$ over $X_{n}^{\omega}$ and $X_{\infty}^{\omega}$ are
$$
H_{\bar{\partial}_{X_n/B_n}}^{p}\left( \mathcal{E} _{\mathcal{X} |X_{n}^{\omega}} \right) :=\frac{\mathrm{Ker}\left( \bar{\partial}_{X_n/B_n}:\Gamma \left( X,\mathcal{A} _{X_n/B_n}^{p}\left( \mathcal{E} _{X_n}^{\omega \infty} \right) \right) \longrightarrow \Gamma \left( X,\mathcal{A} _{X_n/B_n}^{p+1}\left( \mathcal{E} _{X_n}^{\omega \infty} \right) \right) \right)}{\mathrm{Im}\left( \bar{\partial}_{X_n/B_n}:\Gamma \left( X,\mathcal{A} _{X_n/B_n}^{p-1}\left( \mathcal{E} _{X_n}^{\omega \infty} \right) \right) \longrightarrow \Gamma \left( X,\mathcal{A} _{X_n/B_n}^{p}\left( \mathcal{E} _{X_n}^{\omega \infty} \right) \right) \right)}
$$
and respectively
$$
H_{\bar{\partial}_{\sX/B}}^{p}\left( \mathcal{E} _{\mathcal{X} |X_{\infty}^{\omega}} \right) :=\frac{\mathrm{Ker}\left( \bar{\partial}_{\sX/B}:\Gamma \left( X,i^{-1}\mathcal{A} _{\sX/B}^{p}\left( \mathcal{E} _{\mathcal{X}} \right) \right) \longrightarrow \Gamma \left( X,i^{-1}\mathcal{A} _{\sX/B}^{p+1}\left( \mathcal{E} _{\mathcal{X}} \right) \right) \right)}{\mathrm{Im}\left( \bar{\partial}_{\sX/B}:\Gamma \left( X,i^{-1}\mathcal{A} _{\sX/B}^{p-1}\left( \mathcal{E} _{\mathcal{X}} \right) \right) \longrightarrow \Gamma \left( X,i^{-1}\mathcal{A} _{\sX/B}^{p}\left( \mathcal{E} _{\mathcal{X}} \right) \right) \right)}
$$
where $p\geq 0$ and $\Gamma \left( X,\mathcal{A} _{X_n/B_n}^{-1}\left( \mathcal{E} _{X_n}^{\omega \infty} \right) \right) =0$, $\Gamma \left( X,i^{-1}\mathcal{A} _{\sX/B}^{-1}\left( \mathcal{E} _{\mathcal{X}} \right) \right) =0$. \\
Then
$$
H^p\left( X_{n}^{\omega},\mathcal{E} _{\mathcal{X} |X_{n}^{\omega}} \right) \cong H_{\bar{\partial}_{X_n/B_n}}^{p}\left( \mathcal{E} _{\mathcal{X} |X_{n}^{\omega}} \right)
$$
and
$$
H^p\left( X_{\infty}^{\omega},\mathcal{E} _{\mathcal{X} |X_{\infty}^{\omega}} \right) \cong H_{\bar{\partial}_{\sX/B}}^{p}\left( \mathcal{E} _{\mathcal{X} |X_{\infty}^{\omega}} \right).
$$
\end{theorem}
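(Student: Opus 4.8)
The plan is to reduce the two isomorphisms to the abstract de Rham theorem. The underlying topological space of both $X_n^\omega$ and $X_\infty^\omega$ is $X$, so $\Gamma(X_n^\omega,-)=\Gamma(X_\infty^\omega,-)=\Gamma(X,-)$, and consequently the groups $H^p_{\bar{\partial}_{X_n/B_n}}(\mathcal{E}_{\mathcal{X}|X_n^\omega})$ and $H^p_{\bar{\partial}_{\mathcal{X}/B}}(\mathcal{E}_{\mathcal{X}|X_\infty^\omega})$ introduced in the statement are exactly the cohomologies of the complexes of global sections of \ref{resolution1} and \ref{resolution2}. It therefore suffices to confirm that \ref{resolution1} and \ref{resolution2} are resolutions of $\mathcal{E}_{\mathcal{X}|X_n^\omega}$, respectively $\mathcal{E}_{\mathcal{X}|X_\infty^\omega}$, by sheaves that are acyclic for the functor $\Gamma(X,-)$; this was announced in the discussion just before the theorem, and the steps below supply the details.

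\emph{The resolution property.} Exactness of \ref{resolution2} at the terms $i^{-1}\mathcal{A}^p_{\mathcal{X}/B}(\mathcal{E}_{\mathcal{X}})$ with $p\geq 1$ is the exactness of the middle column of the diagram in the proof of Lemma \ref{Poincare lemma}, which follows from the ordinary $\bar{\partial}$-Poincar\'e lemma; exactness of \ref{resolution1} at the terms with $p\geq 1$ is Lemma \ref{Poincare lemma} itself. What remains is the augmentation, i.e. the identification
$$
\mathcal{E}_{\mathcal{X}|X_\infty^\omega}=\ker\!\Big(\bar{\partial}_{\mathcal{X}/B}\colon i^{-1}\mathcal{A}^0_{\mathcal{X}/B}(\mathcal{E}_{\mathcal{X}})\longrightarrow i^{-1}\mathcal{A}^1_{\mathcal{X}/B}(\mathcal{E}_{\mathcal{X}})\Big)
$$
and its truncated counterpart for $\bar{\partial}_{X_n/B_n}$. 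I would check this locally: near a point of $X$, choose a polydisc in $\mathcal{X}$ on which $E$ admits a holomorphic frame, together with fibre and base coordinates furnished by $T=(T_0,\pi)$. In these coordinates a germ of $i^{-1}\mathcal{A}^0_{\mathcal{X}/B}(\mathcal{E}_{\mathcal{X}})$ is a tuple of germs in $\mathcal{C}^{\omega\infty}_{\mathcal{X}}$, and being killed by $\bar{\partial}_{\mathcal{X}/B}$ means precisely that these germs are holomorphic along the fibres; since membership in $\mathcal{C}^{\omega\infty}_{\mathcal{X}}$ already forces real-analytic dependence on the coordinates of $B$, such a germ is a combination of the holomorphic frame sections with coefficients in $i^{-1}\mathcal{O}_{\mathcal{X}}^\omega$, that is, a germ of $\mathcal{E}_{\mathcal{X}|X_\infty^\omega}=i^{-1}\mathcal{E}_{\mathcal{X}}\otimes_{i^{-1}\mathcal{O}_{\mathcal{X}}}i^{-1}\mathcal{O}_{\mathcal{X}}^\omega$; the reverse inclusion is immediate. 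The statement for $X_n^\omega$ then follows by applying the snake lemma to the short exact sequences of complexes displayed in the proof of Lemma \ref{Poincare lemma}, whose left and middle columns are now known to be exact; equivalently, one tensors the identity above with $\mathcal{O}_{X_n}^\omega$ over $i^{-1}\mathcal{O}_{\mathcal{X}}$.

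\emph{Acyclicity of the terms.} For \ref{resolution1} this is precisely Lemma \ref{soft lemma}. For \ref{resolution2}, each $i^{-1}\mathcal{A}^p_{\mathcal{X}/B}(\mathcal{E}_{\mathcal{X}})$ is a sheaf of $i^{-1}\mathcal{C}^{\omega\infty}_{\mathcal{X}}$-modules, and $i^{-1}\mathcal{C}^{\omega\infty}_{\mathcal{X}}$ admits partitions of unity because it contains $\mathcal{C}^\infty_X$; hence these sheaves are fine, so $H^k(X,i^{-1}\mathcal{A}^p_{\mathcal{X}/B}(\mathcal{E}_{\mathcal{X}}))=0$ for every $k>0$, exactly as observed within the proof of Lemma \ref{soft lemma}.

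\emph{Conclusion.} With \ref{resolution1} and \ref{resolution2} exhibited as $\Gamma(X,-)$-acyclic resolutions, I would finish by the usual argument: split each resolution into short exact sequences of sheaves $0\to\mathcal{Z}^p\to\mathcal{A}^p\to\mathcal{Z}^{p+1}\to 0$ (writing $\mathcal{A}^p$ for the term in degree $p$ of the resolution) with $\mathcal{Z}^0=\mathcal{E}_{\mathcal{X}|X_n^\omega}$, respectively $\mathcal{E}_{\mathcal{X}|X_\infty^\omega}$, pass to the associated long exact cohomology sequences, and use the vanishing of the higher cohomology of the resolving sheaves both to obtain the dimension-shifting isomorphisms $H^p(X,\mathcal{Z}^0)\cong H^{p-1}(X,\mathcal{Z}^1)\cong\cdots$ and to identify the surviving connecting homomorphisms with the differentials $\bar{\partial}_{X_n/B_n}$, respectively $\bar{\partial}_{\mathcal{X}/B}$, acting on global sections. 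This yields $H^p(X_n^\omega,\mathcal{E}_{\mathcal{X}|X_n^\omega})\cong H^p_{\bar{\partial}_{X_n/B_n}}(\mathcal{E}_{\mathcal{X}|X_n^\omega})$ and $H^p(X_\infty^\omega,\mathcal{E}_{\mathcal{X}|X_\infty^\omega})\cong H^p_{\bar{\partial}_{\mathcal{X}/B}}(\mathcal{E}_{\mathcal{X}|X_\infty^\omega})$. I expect the augmentation identification of the second paragraph to be the only genuinely non-formal point; everything else is bookkeeping on top of Lemmas \ref{soft lemma} and \ref{Poincare lemma}.
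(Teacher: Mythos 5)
Your proposal is correct and takes essentially the same route as the paper, which likewise deduces the theorem from Lemmas \ref{soft lemma} and \ref{Poincare lemma} by exhibiting \ref{resolution1} and \ref{resolution2} as $\Gamma$-acyclic resolutions and then invoking the abstract de Rham theorem. The only difference is that you explicitly verify the degree-zero augmentation identification, a point the paper leaves implicit.
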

\begin{remark} \label{dulremark}
Similar construction can be created when we consider an anti-holomorphic bundle $\bar{\sE}_{\sX}$ over $\sX$. In this case, $\bar{\sE}^{\omega\infty}_{\sX}=\bar{\sE}_{\sX}\otimes_{\bar{\sO}_{\sX}}\sC^{\omega\infty}_{\sX}$, $\sA^{\cdot}_{\sX/B}(\bar{\sE}_{\sX})=\sA^{\cdot,0}_{\sX/B}\otimes_{\sC^{\omega\infty}_{\sX}}\bar{\sE}^{\omega\infty}_{\sX}$, $\sA^{\cdot}_{X_n/B_n}(\bar{\sE}^{\omega\infty}_{X_n})= \sA^{\cdot,0}_{X_n/B_n}\otimes_{\sC^{\omega\infty}_{X_n}}\bar{\sE}^{\omega\infty}_{X_n}$ and the counterpart of operators are $\partial_{\sX/B}$ on
$\sA^{\cdot}_{\sX/B}(\bar{\sE}_{\sX})$ and $\partial_{X_n/B_n}$ on $\sA^{\cdot}_{X_n/B_n}(\bar{\sE}^{\omega\infty}_{X_n})$.
If we take $\Omega _{\mathcal{X} /B}^{p}$ as a holomorphic sheaf and take $\bar{\Omega} _{\mathcal{X} /B}^{q}$ an anti-holomorphic sheaf over $\sX$, one may see that
$\mathcal{A} _{\mathcal{X} /B}^{q}\left( \Omega _{\mathcal{X} /B}^{p} \right) =\mathcal{A} _{\mathcal{X} /B}^{0,q}\otimes _{\mathcal{C} _{\mathcal{X}}^{\omega \infty}}\left( \Omega _{\mathcal{X} /B}^{\omega \infty} \right) ^p
$ and $\mathcal{A} _{\mathcal{X} /B}^{p}\left( \bar{\Omega}_{\mathcal{X} /B}^{q} \right) =\mathcal{A} _{\mathcal{X} /B}^{p,0}\otimes _{\mathcal{C} _{\mathcal{X}}^{\omega \infty}}\left( \bar{\Omega}_{\mathcal{X} /B}^{\omega \infty} \right) ^q
$. So as sheaves of $\mathcal{C} _{\mathcal{X}}^{\omega \infty}$-module, $\mathcal{A} _{\mathcal{X} /B}^{p}\left( \bar{\Omega}_{\mathcal{X} /B}^{q} \right) =\mathcal{A} _{\mathcal{X} /B}^{q}\left( \Omega _{\mathcal{X} /B}^{p} \right)
$ and we have both of  the operators $\partial_{\sX/B}$ and $\bar{\partial}_{\sX/B}$ operating on the sections of this sheaf. After taking the quotient of it to the finite order $n$, we
get $\mathcal{A} _{X_n/B_n}^{q}(\Omega _{X_n}^{p,\omega \infty})=\mathcal{A} _{X_n/B_n}^{p}(\bar{\Omega}_{X_n}^{q,\omega \infty})
$ and therefore  both of  the operators $\partial_{X_n/B_n}$ and $\bar{\partial}_{X_n/B_n}$ operating on the sections of these sheaf.

\end{remark}


\section{concrete description of Kodaira-Spencer map of n-th order}\label{section3}
\renewcommand{\theequation}
{3.\arabic{equation}} \setcounter{equation}{-1}
Recall that the classical Kodiara-Spencer Map is given by the connecting homomorphism of
the complex (\cite{Voi02I}, (9.1))
$$
0\longrightarrow T_X\longrightarrow T_{\mathcal{X} |X}\longrightarrow \pi^* T_{B|X}  \longrightarrow 0.
$$
The above definition is actually the first order Kodaria-Spencer map and gives us a classification of the first order deformation of $X$ induced by the deformation $\sX$. In this section, we will generalize this map to any order $n$ of infinitesimal deformations(including $n=\infty$) and use the cohomology groups defined in theorem \ref{Dol} to describe the
Kodaira-Spencer map concretely.\\
\indent First, we consider the case $n=\infty$. In this case, we consider the short exact sequences

\begin{align} \label{ex1}
0\longrightarrow \mathcal{T} _{{X} _{\infty}^{\omega}/B^{\omega}}\longrightarrow \mathcal{T} _{{X} _{\infty}^{\omega}}\longrightarrow i^{-1}\left( \pi ^{-1}\mathcal{T} _{B^{\omega}}\otimes _{\pi ^{-1}\mathcal{C} _{B}^{\omega}}\mathcal{O} _{\mathcal{X}}^{\omega} \right) \longrightarrow 0,
\end{align}
and the Kodaria-Spencer map is given by the connecting homomorphism
$$
\delta : H^0\left( X_{\infty}^{\omega},i^{-1}\left( \pi ^{-1}\mathcal{T} _{B^{\omega}}\otimes _{\pi ^{-1}\mathcal{C} _{B}^{\omega}}\mathcal{O} _{\mathcal{X}}^{\omega} \right) \right) =\Gamma \left( B,\mathcal{T} _{B^{\omega}} \right) \longrightarrow H^1\left( X_{\infty}^{\omega},\mathcal{T} _{X_{\infty}^{\omega}/B^{\omega}} \right) .
$$
Note that $
\mathcal{T} _{X_{\infty}^{\omega}/B^{\omega}}\,\,
$ is exactly $\mathcal{T} _{\mathcal{X} /B|X_{\infty}^{\omega}}\,\,$, and therefore we may use the cohomology class in the cohomology group
$H_{\bar{\partial}_{\sX/B}}^{p}( \mathcal{T} _{\mathcal{X}/B|X_{\infty}^{\omega}}) $ to give a description of the above map. \\
\indent Some preparations is needed for the description, first, let us recall that associated to the given diffeomorphism $T$ in section\ref{section 2}, there is a so called "Beltrami differential" $
\varphi \left( t \right)
$ which is a vector $(0,1)$-form on $X$ with the parameter $t$. A precise construction of $
\varphi \left( t \right)
$ can be found in \cite{Voi02I}(Proposition 9.7)(see also \cite{MK71}, $\S$ 4.1). Two fundamental facts about Beltrami differential are that it satisfies the Maurer-Cartan equation
$$
\bar{\partial}\varphi \left( t \right) =\frac{1}{2}\left[ \varphi \left( t \right) ,\varphi \left( t \right) \right],
$$
 $\varphi(0)=0$ and $\varphi(t)$ varies analytically with the parameter $t$, where "$[\cdot,\cdot]$" is the Lie black of the vector $(0,\cdot)$-forms on $X$.

\indent To finish the preparation, we still need a new useful operator defined in paper \cite{XW} $\S 2.1.4$
$$ \rho: A^{0,q}(X,E) \longrightarrow A^{0,q}(X,E_{t}), $$
where $E$ is a holomorphic tensor bundle on $X$ and $E_t$ the corresponding holomorphic tensor bundle on $X_t$.
This operator provides us a better understanding of the Dolbeault Operator $\bar{\partial}_t$ on $X_t$ given by theorem 2.9 in \cite{XW}.
\begin{theorem}\cite{XW} {\label{XWT}}
Let $\bar{\partial}$ and $\bar{\partial}_t$  be the Dolbeault Operator on $X$ and $X_t$, respectively. Then we have
$$
\rho ^{-1}\bar{\partial}_t\rho =\bar{\partial}-\left< \varphi \left( t \right) |, \right. \,\,   on\,\,  A^{0.\cdot}\left( X,E \right)
$$
In particular, for $\sigma \in A^{0,q}\left( X,E \right)$ , $\rho \sigma A^{0,q}\left( X,E_t \right)$ is $\bar{\partial}_t$ -closed if and only if
$$ \bar{\partial}-\left< \varphi \left( t \right) |\left. \sigma \right> =0. \right. $$
\end{theorem}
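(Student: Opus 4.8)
The plan is to treat Theorem \ref{XWT} as a local statement on $X$: since $\rho$ is $\mathcal C^{\infty}$-linear and both $\bar{\partial}_t$ and $\bar{\partial}-\langle\varphi(t)|\,\cdot\,\rangle$ are first order operators, it suffices to verify the identity in a fixed holomorphic coordinate chart $(z^1,\dots,z^m)$ of $X$ chosen small enough that $\sup|\varphi(t)|$ is small there. Write $\varphi(t)=\sum_{i,j}\varphi^i_{\bar j}(t)\,\partial_{z^i}\otimes d\bar z^j$. The deformed complex structure of $X_t$ is described by the fact that $T^{0,1}X_t$ is the graph of $-\varphi(t)\colon T^{0,1}X\to T^{1,0}X$ (and $T^{1,0}X_t$ the graph of $-\overline{\varphi(t)}$), and from this one writes down explicit smooth frames of $T^{1,0}X_t$, $T^{0,1}X_t$, of their duals, and hence of $E_t$, all in terms of the coordinate data and $\varphi(t)$. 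With these in hand one recalls from \cite{XW} \S 2.1.4 that $\rho\colon A^{0,q}(X,E)\to A^{0,q}(X_t,E_t)$ is the explicit $\mathcal C^{\infty}$-linear isomorphism built from $\varphi(t)$ — concretely realised by restricting a $(0,q)$-form to $\wedge^q T^{0,1}X_t$ and smoothly transporting the chosen holomorphic frame of $E$ to the frame of $E_t$ above — so that $\rho$ is in particular multiplicative for the wedge product.

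I would first settle the case $E=\mathcal O_X$, $q=0$. For a function $f$ and $\bar v\in T^{0,1}X$ one has $(\bar{\partial}_tf)(\bar v-\varphi(t)\bar v)=(df)(\bar v-\varphi(t)\bar v)=\bar v(f)-\bigl(\varphi(t)\bar v\bigr)(f)$, and reading this off against the dual frame gives $\rho^{-1}\bar{\partial}_t\rho\,f=\bar{\partial}f-\langle\varphi(t)|\partial f\rangle$ exactly — this is the claimed identity in degree $0$ and it fixes the convention that $\langle\varphi(t)|\,\cdot\,\rangle$ is contraction by the vector part of $\varphi(t)$, extended to $E$-coefficients as in \cite{XW}. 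For the general case, since $\rho$ is multiplicative, $\rho^{-1}\bar{\partial}_t\rho$ obeys the same graded Leibniz rule as $\bar{\partial}-\langle\varphi(t)|\,\cdot\,\rangle$, so it is enough to compare the two operators on the generating frame sections $d\bar z^J\otimes e_\Lambda$. For these the computation reduces to evaluating $\bar{\partial}_t$ on the frame forms $\rho(d\bar z^j)$ and on the transported frame $\rho(e_\lambda)$; here one differentiates the explicit expressions of the first paragraph, and the crucial step is to substitute the Maurer–Cartan equation $\bar{\partial}\varphi(t)=\tfrac12[\varphi(t),\varphi(t)]$ to rewrite the $\bar{\partial}$ of the components of $\varphi(t)$ that appear. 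After transporting back through $\rho^{-1}$, the linear terms reassemble into $\bar{\partial}$ and the quadratic Maurer–Cartan terms reassemble precisely into $-\langle\varphi(t)|\,\cdot\,\rangle$, with nothing left over. The ``in particular'' clause is then immediate, since $\rho$ is an isomorphism: $\rho\sigma$ is $\bar{\partial}_t$-closed $\iff$ $\rho^{-1}\bar{\partial}_t\rho\,\sigma=0\iff(\bar{\partial}-\langle\varphi(t)|\,\cdot\,\rangle)\sigma=0$.

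The main obstacle is organizational rather than conceptual: one must keep straight the two bidegree decompositions (with respect to $X$ and with respect to $X_t$), track carefully the frame change implemented by $\rho$ and $\rho^{-1}$ on both the form factor and the $E$-coefficient factor, and check that the combinatorial and sign coefficients produced when the Maurer–Cartan relation is inserted are exactly those needed so that the output operator is $\bar{\partial}-\langle\varphi(t)|\,\cdot\,\rangle$ and not some perturbation of it. A secondary delicate point is to make the definition of $\langle\varphi(t)|\,\cdot\,\rangle$ for a \emph{general} holomorphic tensor bundle $E$ — where it also acts on the $E$-coefficients through a Lie-derivative-type term — agree with the contributions coming from $\bar{\partial}_t$ applied to the transported frame of $E_t$; once these are aligned, the Leibniz reduction closes the argument. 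Throughout, $\varphi(0)=0$ and the analytic dependence of $\varphi(t)$ on $t$ ensure that $\rho$ is a genuine isomorphism for small $t$ and that all local expressions are well defined.
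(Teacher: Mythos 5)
The paper does not prove this statement: Theorem \ref{XWT} is imported verbatim from \cite{XW} (Theorem 2.9 there), so there is no internal proof to compare against. Judged on its own, your outline is the standard and essentially correct route, and it is close in spirit to Xia's actual argument, which defines $\rho$ through the extension isomorphism $e^{i_{\varphi(t)}}$ (restriction of a $(0,q)$-form to $\wedge^{q}T^{0,1}X_{t}$ plus frame transport on the tensor coefficients), checks the degree-zero identity, and then propagates it by multiplicativity and the Maurer--Cartan relation. Two caveats. First, the entire computational content of the theorem sits in the sentence ``the linear terms reassemble into $\bar{\partial}$ and the quadratic Maurer--Cartan terms reassemble precisely into $-\langle\varphi(t)|\,\cdot\,\rangle$, with nothing left over''; as written this is an assertion, not a verification, and it is exactly the place where the definition of $\langle\varphi(t)|\,\cdot\,\rangle$ on a general tensor bundle (contraction on the form factor plus the Lie-derivative-type action on the coefficients, which is what the later identities $\mathcal{L}^{1,0}_{\varphi(t)}=\varphi(t)\llcorner\partial-\partial\llcorner\varphi(t)$ and $\mathcal{L}^{1,0}_{\varphi(t)}=[\varphi(t),\cdot]$ in Section \ref{section4} rely on) has to be pinned down before the two sides can be matched. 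Second, your choice of $T^{0,1}X_{t}$ as the graph of $-\varphi(t)$ is a convention; it must be checked against the paper's own local normalization $\varphi(t)=\sum_{\alpha,\lambda}A^{\lambda}_{j\alpha}\bar{\partial}\zeta^{\alpha}_{j}\,\partial/\partial z^{\lambda}_{j}$ used in the proof of Theorem \ref{main2}, since an opposite sign there would flip the sign of the contraction term and break the later obstruction formulas. Neither point is a wrong turn, but both must be carried out for the argument to count as a proof rather than a plan.
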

By this theorem, we get the following lemma immediately,
\begin{lemma} \label{closed lemma}
For any section $u \in \Gamma \left( B,\mathcal{T} _{B^{\omega}} \right) $, $\rho d_u(\varphi(t))$ is $\bar{\partial}_{\sX/B}$-closed and thus $[\rho d_u(\varphi(t))]$
is a cohomology class in   $H^1\left( X_{\infty}^{\omega},\mathcal{T} _{X_{\infty}^{\omega}/B^{\omega}} \right).$
\end{lemma}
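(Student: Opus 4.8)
The plan is to reduce the claim to a fibrewise statement about the ordinary Dolbeault operators $\bar{\partial}_t$ on the fibres $X_t$, and then to combine Theorem \ref{XWT} with the derivative of the Maurer--Cartan equation. By construction $\bar{\partial}_{\sX/B}$ is the quotient of $\bar{\partial}_{\sX}$ and acts fibrewise, so a section of $i^{-1}\mathcal{A}^{\cdot}_{\sX/B}(\mathcal{T}_{\sX/B})$ is $\bar{\partial}_{\sX/B}$-closed exactly when its restriction to $X_t$ is $\bar{\partial}_t$-closed for every $t$ near $0$; here the relevant holomorphic bundle is $\mathcal{E}_{\sX}=\mathcal{T}_{\sX/B}$, i.e.\ $E=T_X$ in Theorem \ref{XWT}. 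For $u\in\Gamma(B,\mathcal{T}_{B^{\omega}})$, the object $d_u(\varphi(t))$ is the derivative of the real-analytic family $\varphi(t)$ of $T_X$-valued $(0,1)$-forms in the direction of $u$; it is again a $T_X$-valued $(0,1)$-form depending real-analytically on $t$, and $\rho\, d_u(\varphi(t))$ is to be read pointwise in $t$, with value $\rho_t(d_u(\varphi(t)))\in A^{0,1}(X,T_{X_t})$ at $t$. In particular $\rho\, d_u(\varphi(t))$ is a global section over $X$ and represents an element of the Dolbeault complex computing $H^{1}_{\bar{\partial}_{\sX/B}}(\mathcal{T}_{\sX/B|X_{\infty}^{\omega}})$.

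By Theorem \ref{XWT} applied with $E=T_X$ and $\sigma=d_u(\varphi(t))$, the form $\rho_t(d_u(\varphi(t)))$ is $\bar{\partial}_t$-closed if and only if
\[
\bar{\partial}\, d_u(\varphi(t)) \;=\; \langle\, \varphi(t)\, |\, d_u(\varphi(t))\, \rangle ,
\]
so it suffices to check this identity for each $t$. I would obtain it by differentiating the Maurer--Cartan equation $\bar{\partial}\varphi(t)=\tfrac12[\varphi(t),\varphi(t)]$ along $u$. Since $u$ differentiates only in the $B$-directions while $\bar{\partial}$ differentiates in the $X$-directions of the fixed central fibre, $d_u$ commutes with $\bar{\partial}$; and since $[\cdot,\cdot]$ is bilinear with $[\psi,\chi]=[\chi,\psi]$ for vector $(0,1)$-forms $\psi,\chi$, one gets
\[
\bar{\partial}\, d_u(\varphi(t)) \;=\; \tfrac12\, d_u([\varphi(t),\varphi(t)]) \;=\; [\, \varphi(t),\, d_u(\varphi(t))\, ].
\]
It then remains to identify the operator $\langle\, \varphi(t)\, |\, \cdot\, \rangle$ of \cite{XW}, specialised to the tensor bundle $E=T_X$, with the bracket $[\varphi(t),\cdot]$ on $T_X$-valued $(0,1)$-forms; granting this, the two displays have equal right-hand sides, so $\rho_t(d_u(\varphi(t)))$ is $\bar{\partial}_t$-closed for every $t$, whence $\rho\, d_u(\varphi(t))$ is $\bar{\partial}_{\sX/B}$-closed. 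This closedness, together with $\mathcal{T}_{X_{\infty}^{\omega}/B^{\omega}}=\mathcal{T}_{\sX/B|X_{\infty}^{\omega}}$ and the isomorphism $H^{1}_{\bar{\partial}_{\sX/B}}(\mathcal{T}_{\sX/B|X_{\infty}^{\omega}})\cong H^{1}(X_{\infty}^{\omega},\mathcal{T}_{X_{\infty}^{\omega}/B^{\omega}})$ of Theorem \ref{Dol}, then exhibits $[\rho\, d_u(\varphi(t))]$ as a class in $H^{1}(X_{\infty}^{\omega},\mathcal{T}_{X_{\infty}^{\omega}/B^{\omega}})$.

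The routine parts --- commuting $d_u$ past $\bar{\partial}$, differentiating the bracket, real-analytic dependence on $t$ --- are straightforward. The one step requiring genuine care, and which I expect to be the main obstacle, is matching the conventions: one must check that the operator $\langle\, \varphi(t)\, |\, \cdot\, \rangle$ from \cite{XW}, defined there for a general holomorphic tensor bundle, reduces on $E=T_X$ to precisely the bracket $[\varphi(t),\cdot]$ in the normalisation in which the Maurer--Cartan equation is stated here, with no stray sign or factor. A secondary point worth recording explicitly is that ``$\bar{\partial}_{\sX/B}$-closed over $X_{\infty}^{\omega}$'' is indeed equivalent to ``fibrewise $\bar{\partial}_t$-closed for all $t$ in a neighbourhood of $0$'', which is immediate from the definition of $\bar{\partial}_{\sX/B}$ as a quotient of $\bar{\partial}_{\sX}$ but is what lets one invoke Theorem \ref{XWT} fibre by fibre.
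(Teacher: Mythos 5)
Your proposal is correct and follows essentially the same route as the paper's proof: differentiate the Maurer--Cartan equation along $u$, use the symmetry of the bracket on vector $(0,1)$-forms to merge the two resulting terms into $[\varphi(t),d_u(\varphi(t))]$, and conclude by Theorem \ref{XWT}. The convention-matching step you flag --- identifying $\langle \varphi(t)|\,\cdot\,\rangle$ with $[\varphi(t),\cdot]$ when $E=T_X$ --- is likewise left implicit in the paper's own argument.
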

\begin{proof}
By the Maurer-Cartan equation of Beltrami differential, we get
$$
\bar{\partial}\varphi \left( t \right) -\frac{1}{2}\left[ \varphi \left( t \right) ,\varphi \left( t \right) \right]=0,
$$
Then use $u$ to operator on both side of the equation above, we get
$$
\bar{\partial}d_u(\varphi \left( t \right)) -\frac{1}{2}\left[ d_u(\varphi \left( t \right)) ,\varphi \left( t \right) \right]-\frac{1}{2}\left[ (\varphi \left( t \right) ,d_u(\varphi \left( t \right)) \right]=0.
$$
Then since $\varphi(t)$ and $d_u(\varphi(t))$ are both (0,1)-forms,
$$
\bar{\partial}d_u(\varphi \left( t \right)) -\left[ (\varphi \left( t \right) ,d_u(\varphi \left( t \right)) \right]=0.
$$
And thus we finished the proof by using the theorem \ref{XWT}.
\end{proof}
\indent The following result shows that by using the cohomology class appears in the above lemma we have a concrete description of the connection morphism of the
exact sequence \ref{ex1}.
\begin{theorem} \label{main2}
The map given by
\begin{align*}
\kappa : \Gamma \left( B,\mathcal{T} _{B^{\omega}} \right)& \longrightarrow   H^1\left(  X_{\infty}^{\omega},\mathcal{T} _{X_{\infty}^{\omega}/B^{\omega}} \right)&
\\
       u & \longmapsto  [\rho d_u\left( \varphi \left( t \right) \right)]&
\end{align*}
is the connection morphism
\begin{equation} \label{KSM}
\delta :  \Gamma \left( B,\mathcal{T} _{B^{\omega}} \right) \longrightarrow H^1\left( X_{\infty}^{\omega},\mathcal{T} _{X_{\infty}^{\omega}/B^{\omega}} \right) .
\end{equation}
of the exact sequence \ref{ex1}. We call this map $\kappa$ the Kodaria-Spencer map of order $\infty$.
\end{theorem}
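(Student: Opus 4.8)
The plan is to compute the connecting homomorphism $\delta$ of \ref{ex1} through a short exact sequence of Dolbeault-type resolutions, and then to recognize the cocycle it produces as $\rho\, d_u(\varphi(t))$. For the first step one observes that \ref{ex1} is the restriction to $X_{\infty}^{\omega}$ of the relative tangent sequence $0\to\mathcal{T}_{\mathcal{X}/B}\to\mathcal{T}_{\mathcal{X}}\to\pi^{*}\mathcal{T}_{B}\to 0$ of holomorphic vector bundles on $\mathcal{X}$; here one uses $\mathcal{T}_{X_{\infty}^{\omega}/B^{\omega}}=\mathcal{T}_{\mathcal{X}/B|X_{\infty}^{\omega}}$, $\mathcal{T}_{X_{\infty}^{\omega}}=\mathcal{T}_{\mathcal{X}|X_{\infty}^{\omega}}$, and $i^{-1}\bigl(\pi^{-1}\mathcal{T}_{B^{\omega}}\otimes_{\pi^{-1}\mathcal{C}_{B}^{\omega}}\mathcal{O}_{\mathcal{X}}^{\omega}\bigr)=(\pi^{*}\mathcal{T}_{B})_{|X_{\infty}^{\omega}}$. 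Applying the construction of Theorem \ref{Dol} to each of the three bundles and using that the relative tangent sequence is locally split, so that it survives $\otimes_{\mathcal{O}_{\mathcal{X}}}\mathcal{A}_{\mathcal{X}/B}^{0,q}$, one gets for every $q\ge 0$ a short exact sequence of sheaves over $X$
\[
0\longrightarrow i^{-1}\mathcal{A}_{\mathcal{X}/B}^{q}\bigl(\mathcal{T}_{\mathcal{X}/B}\bigr)\longrightarrow i^{-1}\mathcal{A}_{\mathcal{X}/B}^{q}\bigl(\mathcal{T}_{\mathcal{X}}\bigr)\longrightarrow i^{-1}\mathcal{A}_{\mathcal{X}/B}^{q}\bigl(\pi^{*}\mathcal{T}_{B}\bigr)\longrightarrow 0,
\]
compatible with the operators $\bar{\partial}_{\mathcal{X}/B}$. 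By Lemma \ref{soft lemma} all terms are $\Gamma$-acyclic, by Lemma \ref{Poincare lemma} the three columns are resolutions, and the outer two are exactly those of Theorem \ref{Dol}; so taking $\Gamma(X,-)$ yields a short exact sequence of complexes whose connecting map $H^{0}\to H^{1}$ coincides, by the standard comparison of the two long exact sequences, with $\delta$.

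Next I would run the zig-zag. The class of $u\in\Gamma(B,\mathcal{T}_{B^{\omega}})$ is represented in $\Gamma\bigl(X,i^{-1}\mathcal{A}_{\mathcal{X}/B}^{0}(\pi^{*}\mathcal{T}_{B})\bigr)$ by $\pi^{*}u$, which is $\bar{\partial}_{\mathcal{X}/B}$-closed because it is constant along the fibres. Using the diffeomorphism $T=(T_{0},\pi)$, whose $T_{0}$-fibres are complex submanifolds, the smooth $(1,0)$-tangent bundle of $\mathcal{X}$ splits as $T^{1,0}_{\mathcal{X}/B}\oplus\pi^{*}T^{1,0}B$, and the horizontal lift $\widetilde{u}:=(0,u)$ of $u$ in this splitting is a section of $i^{-1}\mathcal{A}_{\mathcal{X}/B}^{0}(\mathcal{T}_{\mathcal{X}})$ mapping to $\pi^{*}u$. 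Hence $\bar{\partial}_{\mathcal{X}/B}\widetilde{u}$ maps to $\bar{\partial}_{\mathcal{X}/B}(\pi^{*}u)=0$, so it lies in $\Gamma\bigl(X,i^{-1}\mathcal{A}_{\mathcal{X}/B}^{1}(\mathcal{T}_{\mathcal{X}/B})\bigr)$, is $\bar{\partial}_{\mathcal{X}/B}$-closed, and by construction $[\bar{\partial}_{\mathcal{X}/B}\widetilde{u}]=\delta(u)$ in $H^{1}_{\bar{\partial}_{\mathcal{X}/B}}\bigl(\mathcal{T}_{\mathcal{X}/B|X_{\infty}^{\omega}}\bigr)\cong H^{1}\bigl(X_{\infty}^{\omega},\mathcal{T}_{X_{\infty}^{\omega}/B^{\omega}}\bigr)$.

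It then remains to identify $[\bar{\partial}_{\mathcal{X}/B}\widetilde{u}]$ with $[\rho\, d_u(\varphi(t))]$, and this is the core. I would transport the relative complex to $X_{0}$ by $\rho$: by Theorem \ref{XWT}, over the fibre $X_{t}$ the operator $\bar{\partial}_{\mathcal{X}/B}=\bar{\partial}_{t}$ corresponds under $\rho^{-1}$ to $\bar{\partial}-\langle\varphi(t)|$ on $A^{0,\bullet}(X_{0},T_{X_{0}})$, and $\rho\, d_u(\varphi(t))$ corresponds to $d_u(\varphi(t))$, which is closed by Lemma \ref{closed lemma}. So it suffices to check that $\rho^{-1}\bigl(\bar{\partial}_{\mathcal{X}/B}\widetilde{u}\bigr)$ equals $d_u(\varphi(t))$, or at least differs from it by a $(\bar{\partial}-\langle\varphi(t)|)$-exact term. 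But $\bar{\partial}_{\mathcal{X}/B}\widetilde{u}$ fibrewise measures precisely how the holomorphic structure of $\mathcal{X}$ along $X_{t}$ fails to be the product structure, which by the construction of $\varphi(t)$ (\cite{Voi02I}, Prop.~9.7; see also \cite{MK71}, \S 4.1) is encoded by $\varphi(t)$ itself; differentiating that relation in the direction $u$, and matching it against the definition of $\rho$ from \cite{XW}, \S 2.1.4, turns $\bar{\partial}_{\mathcal{X}/B}\widetilde{u}$ into $\rho\, d_u(\varphi(t))$, up to a $\bar{\partial}_{\mathcal{X}/B}$-exact term (coming from the choice-dependent vertical part of $\widetilde{u}$) that does not affect the cohomology class.

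The main obstacle is exactly this last identification: one has to bring together the three pieces of data — the trivialization $T$, the Beltrami differential $\varphi(t)$ it determines, and the operator $\rho$ of \cite{XW} — and carry out the $d_u$-differentiation carefully, sorting the two terms it produces (one recognized as $\rho\, d_u(\varphi(t))$, the other absorbed into $\mathrm{Im}\,\bar{\partial}_{\mathcal{X}/B}$). If the direct computation with the lift $\widetilde{u}=(0,u)$ proves unwieldy, a robust alternative is to pass to \v{C}ech representatives: the \v{C}ech $1$-cocycle $\{\widetilde{u}_{\alpha}-\widetilde{u}_{\beta}\}$ of local holomorphic lifts of $u$ represents $\delta(u)$, and one checks it is cohomologous to the \v{C}ech cocycle extracted from $\rho\, d_u(\varphi(t))$ via the Dolbeault--\v{C}ech comparison, which reduces the statement to the classical first-order Kodaira--Spencer computation carried out with the parameter $t$ kept alive. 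Finally, since both $\kappa$ and $\delta$ are $\mathbb{C}$-linear, checking $\kappa(u)=\delta(u)$ for each $u$ gives $\kappa=\delta$.
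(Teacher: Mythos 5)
Your overall architecture matches the paper's: identify $\delta$ with the connecting map coming from the $\bar{\partial}_{\mathcal{X}/B}$-resolutions of theorem \ref{Dol} (equivalently, with the \v{C}ech coboundary of local holomorphic lifts of $u$), run the zig-zag with the $\mathcal{C}^{\omega\infty}$ lift $\widetilde{u}$ supplied by the trivialization $T$, and then identify the resulting class with $[\rho\, d_u(\varphi(t))]$. The first two steps are sound, and your observation that $\delta(u)=[\bar{\partial}_{\mathcal{X}/B}\widetilde{u}]$ is exactly what the paper obtains in coordinates: writing $\zeta_j^{\alpha}=\zeta_j^{\alpha}(z_j,t)$, the difference between the local holomorphic lift of $u$ and $\widetilde{u}$ is $\sigma_j=\sum_{\alpha}d_u(\zeta_j^{\alpha})\,\partial/\partial\zeta_j^{\alpha}$, whose \v{C}ech coboundary is the cocycle $\{\sum_{\alpha}d_u(f_{jk}^{\alpha})\,\partial/\partial\zeta_j^{\alpha}\}$ representing $\delta(u)$, and whose $\bar{\partial}_{\mathcal{X}/B}$-image is the corresponding Dolbeault representative.

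However, the step you yourself call ``the main obstacle''--- the identity $\bar{\partial}_{\mathcal{X}/B}\widetilde{u}=\rho\, d_u(\varphi(t))$, equivalently
\begin{equation*}
\bar{\partial}_{\mathcal{X}/B}\Bigl(\sum_{\alpha}d_u\bigl(\zeta_j^{\alpha}(z_j,t)\bigr)\frac{\partial}{\partial\zeta_j^{\alpha}}\Bigr)=\rho\, d_u\bigl(\varphi(t)\bigr)
\end{equation*}
--- is asserted rather than proved, and this identity is the entire content of the theorem; everything else is routine homological algebra. Establishing it requires: (i) the local form $\varphi(t)=\sum_{\alpha,\lambda}A_{j\alpha}^{\lambda}\,\bar{\partial}\zeta_j^{\alpha}\,\partial/\partial z_j^{\lambda}$ with $A_j=(\partial\zeta_j/\partial z_j)^{-1}$; (ii) applying $d_u$, which produces two terms, one from $d_u(A_{j\alpha}^{\lambda})$ (differentiating an inverse matrix) and one from $d_u(\bar{\partial}\zeta_j^{\alpha})=\bar{\partial}(d_u\zeta_j^{\alpha})$; (iii) applying $\rho$, i.e.\ converting the frame $\partial/\partial z_j^{\lambda}$ into $\partial/\partial\zeta_j^{\mu}$; and (iv) comparison with $\bar{\partial}_{\mathcal{X}/B}=\bar{\partial}-\sum_{\lambda,\beta}A_{j\beta}^{\lambda}\bar{\partial}\zeta_j^{\beta}\,\partial/\partial z_j^{\lambda}$ applied to the coefficients $d_u(\zeta_j^{\alpha})$. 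Both resulting pairs of terms must be matched; neither is discarded. Your anticipated outcome is also slightly off: you expect one term to be ``recognized as $\rho\, d_u(\varphi(t))$'' and another to be ``absorbed into $\mathrm{Im}\,\bar{\partial}_{\mathcal{X}/B}$,'' invoking a $\bar{\partial}_{\mathcal{X}/B}$-exact correction from the vertical ambiguity of $\widetilde{u}$; in fact, with the lift determined by $T$ the identity holds on the nose, both terms of $d_u(\varphi(t))$ being needed to match the two terms of $\bar{\partial}_{\mathcal{X}/B}\sigma_j$, and no exact correction appears. Until this computation is carried out, the proof is incomplete.
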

\begin{proof}
The proof is given by local computations. So we need to choose a local coordinates system. At first, we choose a coordinate neighborhood $U$ of a point $0\in B$ and local coordinates
$\{t^1,...,t^s\}$ of $U$ center $0$. Let $\pi^{-1}(U)$ be covered by open sets $\{\sU_j\}_{j\in \Lambda}$: $
\cup_{j\in \Lambda}^{}{\mathcal{U} _j}=\pi ^{-1}\left( U \right), \sU_j=U_j \times U
$ where $\{U_J\}$ is an open cover of $X$(Note that we only focus on the small deformation, we may assume $\sX=\pi^{-1}(U)$ and $B=U$).
Since $\pi$ is a holomorphic map, we can choose local coordinates of $\sU_j$ as $
\left( \zeta _{j}^{1},\zeta _{j}^{2},...,\zeta _{j}^{m},t^1,t^2,...,t^s \right)
$. (That is $t^1,t^2,...,t^s$ is chosen as part of the local coordinates of $\sU_j$ for all $j$.) If $\sU_j \cap \sU_k \neq \Phi $, between the two local coordinates we have
the following relation:
$$
\zeta _{j}^{\alpha}=f_{jk}^{\alpha}\left( \zeta _{j}^{1},...,\zeta _{j}^{m},t^1,...,t^s \right) , \alpha=1,2,...,m.
$$
By direct computation (\cite{YU}, $\S$ 1.1.2), one will find that the map
\begin{align*}
\Gamma \left( B,\mathcal{T} _{B^{\omega}} \right)& \longrightarrow   \check{H}^1\left(  X_{\infty}^{\omega},\mathcal{T} _{X_{\infty}^{\omega}/B^{\omega}} \right)&
\\
       u & \longmapsto \left\{ \sum_{\alpha =1}^n{d_u\left( f_{jk}^{\alpha} \right) \left( \zeta ^1,...,\zeta ^m,t^1,...,t^s \right)}\frac{\partial}{\partial \zeta _{j}^{\alpha}} \right\}
&
\end{align*}
(where $\left\{ \sum_{\alpha =1}^n{d_u\left( f_{jk}^{\alpha} \right) \left( \zeta ^1,...,\zeta ^m,t^1,...,t^s \right)}\frac{\partial}{\partial \zeta _{j}^{\alpha}} \right\}$ is a $\check{C}$ech one-cocycle with coefficients in holomorphic tangent vector fields) is the connection morphism $\delta$.

 Let $z_j=
\left( z_{j}^{1},z_{j}^{2},...,z_{j}^{m} \right)
$ be the local coordinates of $U_j$, then we may take $ ( z_{j}^{1},z_{j}^{2},...,z_{j}^{m},t^{1},t^{2},...,t^{s})$ as a differentiable coordinates on $\sU_j$.
One may consider $\zeta^{\alpha}_j$ as a function $\zeta^{\alpha}_j(z_j,t)$  of  $ ( z_{j}^{1},z_{j}^{2},...,z_{j}^{m},t^{1},t^{2},...,t^{s})$, we have
$$
d_u\left( \zeta _{j}^{\alpha} \right) =\sum_{\beta}{\frac{\partial f_{jk}^{\alpha}}{\partial \zeta _{k}^{\beta}}d_u\left( \zeta _{k}^{\beta} \right) +d_u\left( f_{jk}^{\alpha} \right)}
$$
Thus,
\begin{align*}
\sum_{\alpha =1}^m{d_u\left( f_{jk}^{\alpha} \right) }\frac{\partial}{\partial \zeta _{j}^{\alpha}} &=
\sum_{\alpha}{d_u\left( \zeta _{j}^{\alpha} \right)}\left( \frac{\partial}{\partial \zeta _{j}^{\alpha}} \right) -\sum_{\alpha}{d_u\left( \zeta _{k}^{\alpha} \right) \left( \frac{\partial \zeta _{j}^{\alpha}}{\partial \zeta _{k}^{\alpha}} \right)}\left( \frac{\partial}{\partial \zeta _{j}^{\alpha}} \right)&\\
&=\sum_{\alpha}{d_u\left( \zeta _{j}^{\alpha} \right)}\left( \frac{\partial}{\partial \zeta _{j}^{\alpha}} \right) -\sum_{\alpha}{d_u\left( \zeta _{k}^{\alpha} \right)}\left( \frac{\partial}{\partial \zeta _{k}^{\alpha}} \right).&
\end{align*}
In order to finish the proof, we need to show that on each $\sU_j$,
\begin{align} \label{top}
\bar{\partial}_{\mathcal{X} /B}\left( \sum_{\alpha}{d_u\left( \zeta _{j}^{\alpha} \right)}\left( \frac{\partial}{\partial \zeta _{j}^{\alpha}} \right) \right) =\rho d_u\left( \varphi \left( t \right) \right).
\end{align}
In fact, on each $\sU_j$, the local form of $\varphi(t)$ is given by (\cite{MK71}, $\S$ 4.1)
$$
\varphi \left( t \right) =\sum_{\alpha ,\lambda}{A_{j\alpha}^{\lambda}\bar{\partial}\zeta _{j}^{\alpha}\left( z_j,t \right) \frac{\partial}{\partial z_{j}^{\lambda}}},
$$
where $$
A_{j\alpha}^{\lambda}=\left( \frac{\partial \zeta _{j}^{\alpha}}{\partial z^{\lambda}_j} \right) ^{-1}.
$$

Being operated by $d_u$ and we get
\begin{align*}
d_u\left( \varphi \left( t \right) \right) &=\sum_{\alpha ,\lambda}{d_u\left( A_{j\alpha}^{\lambda} \right) \bar{\partial}\zeta _{j}^{\alpha}\left( z_j,t \right) \frac{\partial}{\partial z_{j}^{\lambda}}}+\sum_{\alpha ,\lambda}{A_{j\alpha}^{\lambda}d_u\left( \bar{\partial}\zeta _{j}^{\alpha}\left( z_j,t \right) \right) \frac{\partial}{\partial z_{j}^{\lambda}}}&
\\
&=-\sum_{\alpha ,\lambda,\beta,\nu}{A_{j\nu}^{\lambda}d_u\left( \frac{\partial \zeta _{j}^{\nu}}{\partial z_{j}^{\beta}} \right) A_{j\alpha}^{\beta}\bar{\partial}\zeta _{j}^{\alpha}\left( z_j,t \right) \frac{\partial}{\partial z_{j}^{\lambda}}}+\sum_{\alpha ,\lambda}{A_{j\alpha}^{\lambda}\bar{\partial}\left( d_u\left( \zeta _{j}^{\alpha}\left( z_j,t \right) \right) \right) \frac{\partial}{\partial z_{j}^{\lambda}}}&
\end{align*}
Then
\begin{align*}
\rho d_u\left( \varphi \left( t \right) \right) =&-\sum_{\alpha ,\lambda ,\beta ,\nu ,\mu}{A_{j\nu}^{\lambda}d_u\left( \frac{\partial \zeta _{j}^{\nu}}{\partial z_{j}^{\beta}} \right) A_{j\alpha}^{\beta}\bar{\partial}\zeta _{j}^{\alpha}\left( z_j,t \right) \frac{\partial \zeta _{j}^{\mu}}{\partial z_{j}^{\lambda}}\frac{\partial}{\partial \zeta _{j}^{\mu}}}\\
&+\sum_{\alpha ,\lambda ,\mu}{A_{j\alpha}^{\lambda}\bar{\partial}\left( d_u\left( \zeta _{j}^{\alpha}\left( z_j,t \right) \right) \right) \frac{\partial \zeta _{j}^{\mu}}{\partial z_{j}^{\lambda}}\frac{\partial}{\partial \zeta _{j}^{\mu}}}&
\\
=&-\sum_{\alpha ,\beta ,\nu}{d_u\left( \frac{\partial \zeta _{j}^{\nu}}{\partial z_{j}^{\beta}} \right) A_{j\alpha}^{\beta}\bar{\partial}\zeta _{j}^{\alpha}\left( z_j,t \right) \frac{\partial}{\partial \zeta _{j}^{\nu}}}+\sum_{\alpha}{\bar{\partial}\left( d_u\left( \zeta _{j}^{\alpha}\left( z_j,t \right) \right) \right) \frac{\partial}{\partial \zeta _{j}^{\alpha}}}.&
\end{align*}
On the other hand,
\begin{align*}
\bar{\partial}_{\mathcal{X} /B}\left( \sum_{\alpha}{d_u\left( \zeta _{j}^{\alpha}\left( z_j,t \right) \right)}\left( \frac{\partial}{\partial \zeta _{j}^{\alpha}} \right) \right) =&\bar{\partial}_{\mathcal{X} /B}\left( \sum_{\alpha}{d_u\left( \zeta _{j}^{\alpha}\left( z_j,t \right) \right)} \right) \frac{\partial}{\partial \zeta _{j}^{\alpha}}
\\
=&\left( \bar{\partial}-\sum_{\lambda ,\beta}{A_{j\beta}^{\lambda}\bar{\partial}\zeta _{j}^{\beta}\left( z_j,t \right) \frac{\partial}{\partial z_{j}^{\lambda}}} \right) \left( \sum_{\alpha}{d_u\left( \zeta _{j}^{\alpha}\left( z_j,t \right) \right)} \right) \frac{\partial}{\partial \zeta _{j}^{\alpha}}
\\
=&\sum_{\alpha}{\bar{\partial}\left( d_u\left( \zeta _{j}^{\alpha}\left( z_j,t \right) \right) \right)}\frac{\partial}{\partial \zeta _{j}^{\alpha}}\\
&-\sum_{\lambda ,\beta ,\alpha}{A_{j\beta}^{\lambda}\bar{\partial}\zeta _{j}^{\beta}\left( z_j,t \right) d_u\left( \frac{\partial \zeta _{j}^{\alpha}}{\partial z_{j}^{\lambda}} \right)}\frac{\partial}{\partial \zeta _{j}^{\alpha}}.
\end{align*}
So we get \ref{top} and finish the proof.
\end{proof}
\begin{remark}
The usual way to define the Kodaria-Spencer map is given by the connection morphism of the short exact sequence(\cite{YU} $\S$1.1.2)
$$
0\longrightarrow \mathcal{T} _{\mathcal{X} /B}\longrightarrow \mathcal{T} _{\mathcal{X}}\longrightarrow \pi ^*\mathcal{T} _B\longrightarrow 0,
$$
and use the $\check{C}$ech cohomology class to describe the map. It is obviously that $\mathcal{T}_B$ is a sub sheaf of $\mathcal{T}_B^{\omega}$.
 If we take the restriction of our map \ref{KSM} to the subset  $\Gamma \left( B,\mathcal{T} _{B} \right)$, we will come back to the usual
Kodaria-Spencer map. Therefore the map \ref{KSM} can be considered as a kind of generalization of the usual one which can be used to study the
real-analytic variety on the local moduli space of complex structures. Furthermore the description of the map by using the Dolbeault type cohomology groups
in $\S$\ref{section2} provides a way to use analytic tools to study the cohomology groups related to infinitesimal deformations for us.

\end{remark}
\indent For the case when $n$ is finite,  we consider the short exact sequences (a generalization of \cite{Ye4} (1.24))
\begin{align} \label{ex2}
0 \longrightarrow \mathcal{T} _{X_{n-1}^{\omega}/B_{n-1}^{\omega}}\longrightarrow \mathcal{T} _{X_{n}^{\omega}|X_{n-1}^{\omega}}\longrightarrow \pi ^{-1}\mathcal{T} _{B_{n}^{\omega}|B_{n-1}^{\omega}}\otimes _{\pi ^{-1}\mathcal{C} _{B_{n-1}}^{\omega}}\mathcal{O} _{X_{n-1}}^{\omega}\longrightarrow 0.
\end{align}
and the Kodaria-Spencer map is given by the connecting homomorphism
$$
\delta : H^0\left( X_{n-1}^{\omega},\pi ^{-1}\mathcal{T} _{B_{n}^{\omega}|B_{n-1}^{\omega}}\otimes _{\pi ^{-1}\mathcal{C} _{B_n}^{\omega}}\mathcal{O} _{X_{n-1}}^{\omega} \right)  =\mathcal{T} _{B_{n}^{\omega}|B_{n-1}^{\omega}} \longrightarrow H^1\left( X_{n-1}^{\omega},\mathcal{T} _{X_{n}^{\omega}/B^{\omega}_{n-1}} \right) .
$$
Here we define the sheaf  $\mathcal{T} _{X_{n}^{\omega}|X_{n-1}^{\omega}}$ as the sheaf of derivations of $\sO^{\omega}_{X_n}$ with values in $\sO^{\omega}_{X_{n-1}}$ and $\mathcal{T} _{B_{n}^{\omega}|B_{n-1}^{\omega}}$ as the sheaf of derivations of $\sC^{\omega}_{B_n}$ with values in $\sC^{\omega}_{B_{n-1}}$.
Note that $
\mathcal{T} _{X_{n}^{\omega}/B^{\omega}}\,\,
$ is exactly $\mathcal{T} _{\mathcal{X} /B|X_{n}^{\omega}}\,\,$, and therefore we may use the cohomology class in the cohomology group
$H_{\bar{\partial}_{X_n/B_n}}^{p}( \mathcal{T} _{\mathcal{X}/B|X_{n}^{\omega}}) $ to give a description of the above map. 
Note that the  Kodaria-Spencer map of finite order is just the restriction of the Kodaria-Spencer map over $X^{\omega}_{\infty}$ which has been given in theorem \ref{main2}. So we have the following theorem for the cases of finite order.\\
\begin{theorem} \label{main3}
The map given by
\begin{align*}
\kappa_n:
\mathcal{T} _{B_{n}^{\omega}|B_{n-1}^{\omega}} & \longrightarrow H^1\left( X_{n-1}^{\omega},\mathcal{T} _{X_{n}^{\omega}/B^{\omega}_{n-1}} \right) &
\\
       u & \longmapsto  [r_{n-1}(\rho d_{\tilde{u}}\left( \varphi \left( t \right) \right))]&
\end{align*}
where $r_{n-1}$ is the quotient map form  $i^{-1}\sA^{p}_{\sX/B}(\sE_{\sX})$ to  $\sA^{p}_{X_n/B_n}(\sE^{\omega\infty}_{X_n})$ and $\tilde{u}$ is a section on $\mathcal{T} _{B^{\omega}}$
such that $r_{n-1}\tilde{u}=u$
is the connection morphism
\begin{align} \label{KSM2}
\delta :  \mathcal{T} _{B_{n}^{\omega}|B_{n-1}^{\omega}} \longrightarrow H^1\left( X_{n-1}^{\omega},\mathcal{T} _{X_{n}^{\omega}/B^{\omega}_{n-1}} \right) .
\end{align}
of the exact sequence \ref{ex2}.  We call this map $\kappa_n$ the Kodaria-Spencer map of order $n$.
\end{theorem}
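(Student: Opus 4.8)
The plan is to deduce Theorem \ref{main3} from Theorem \ref{main2} by the functoriality of the connecting homomorphism, thereby making precise the remark preceding the statement that the finite order Kodaira--Spencer map is the restriction of the order $\infty$ one. The first step is to exhibit a morphism of short exact sequences from \ref{ex1} to \ref{ex2} whose vertical arrows are the reduction maps $r_{n-1}$. A $\C$-linear derivation of $i^{-1}\mathcal{O}^{\omega}_{\mathcal{X}}$ lowers the $\mathcal{M}_0^{\omega}$-adic order by at most one, hence descends to a map $\mathcal{O}^{\omega}_{X_n}\to\mathcal{O}^{\omega}_{X_{n-1}}$, i.e. to a section of $\mathcal{T}_{X^{\omega}_n|X^{\omega}_{n-1}}$; a derivation relative to $\pi^{-1}\mathcal{C}^{\omega}_B$ is moreover $\pi^{-1}\mathcal{C}^{\omega}_B$-linear, so it carries each power $(\mathcal{M}_0^{\omega})^k$ into itself and descends to a section of $\mathcal{T}_{X^{\omega}_{n-1}/B^{\omega}_{n-1}}$; and a derivation of $\pi^{-1}\mathcal{C}^{\omega}_B$ descends to a section of $\mathcal{T}_{B^{\omega}_n|B^{\omega}_{n-1}}$. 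These give the three vertical arrows, and one checks, directly from the definitions of the ``mixed order'' derivation sheaves $\mathcal{T}_{X^{\omega}_n|X^{\omega}_{n-1}}$ and $\mathcal{T}_{B^{\omega}_n|B^{\omega}_{n-1}}$, that both squares commute. I expect this unwinding of the derivation sheaves to be the only genuine obstacle; everything afterwards is formal.

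The next step is to identify the induced maps on cohomology. On $H^0$ the reduction map is the truncation $\Gamma(B,\mathcal{T}_{B^{\omega}})\to\mathcal{T}_{B^{\omega}_n|B^{\omega}_{n-1}}$, $\tilde{u}\mapsto u$, which is surjective after shrinking $B$ if necessary, and this is precisely why a lift $\tilde{u}$ of a given $u$ exists. On $H^1$ I would invoke the short exact sequence of Dolbeault type complexes appearing in the proof of Lemma \ref{soft lemma}, whose quotient map is precisely $r_{n-1}$ and which commutes with $\bar{\partial}_{\mathcal{X}/B}$ and its quotient $\bar{\partial}_{X_n/B_n}$ by construction; since by Theorem \ref{Dol} its outer complexes (taken with $\mathcal{E}=\mathcal{T}_{\mathcal{X}/B}$) are the $\Gamma$-acyclic resolutions computing the cohomology groups appearing as targets of the two connecting homomorphisms, the map induced by $r_{n-1}$ on global sections realizes the sheaf cohomology reduction map, and on Dolbeault classes it is simply $[\alpha]\mapsto[r_{n-1}\alpha]$. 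That these identifications are compatible with the morphism of short exact sequences of the first step is routine diagram chasing.

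Finally, by the naturality of the connecting homomorphism the first two steps produce a commutative square whose content is that the reduction map on $H^1$ sends the image of $\tilde{u}$ under the connecting homomorphism of \ref{ex1} to the image of $u$ under that of \ref{ex2}. Combining this with Theorem \ref{main2} gives, for any $u$ and any lift $\tilde{u}$,
\[
\delta(u)=r_{n-1}(\delta(\tilde{u}))=r_{n-1}([\rho d_{\tilde{u}}(\varphi(t))])=[r_{n-1}(\rho d_{\tilde{u}}(\varphi(t)))],
\]
where the $\delta$ on the far left is the connecting homomorphism of \ref{ex2} and the others that of \ref{ex1}; this is exactly the asserted formula, and since the left-hand side is independent of $\tilde{u}$, so is the right-hand side, which shows in passing that $\kappa_n$ is well defined. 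Should setting up the morphism of exact sequences prove cumbersome, an alternative is to rerun the local coordinate computation in the proof of Theorem \ref{main2} after reducing every identity there modulo $(\mathcal{M}_0^{\omega})^n$: the $\check{C}$ech cocycle $\{\sum_{\alpha}d_{\tilde{u}}(f^{\alpha}_{jk})\,\partial/\partial\zeta^{\alpha}_j\}$ truncates to a representative of $\delta(u)$ for \ref{ex2}, and applying $r_{n-1}$ to the key identity of that proof (recall $r_{n-1}$ commutes with $\bar{\partial}_{\mathcal{X}/B}$) identifies its class with $[r_{n-1}(\rho d_{\tilde{u}}(\varphi(t)))]$.
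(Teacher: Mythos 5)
Your proposal is correct and takes essentially the same route as the paper: the paper gives no written proof of this theorem beyond the preceding one-sentence remark that the finite-order Kodaira--Spencer map is just the restriction of the order-$\infty$ map of Theorem \ref{main2}, and your morphism of short exact sequences from \ref{ex1} to \ref{ex2} together with the naturality of the connecting homomorphism (and the compatibility of $r_{n-1}$ with the Dolbeault type resolutions) is exactly the argument that remark leaves implicit. Nothing in your write-up deviates from the paper's intended reasoning; you have simply supplied the omitted details.
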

\begin{remark}
In the case when $n=1$ theorem \ref{main3} comes back to Proposition 9.7 in \cite{Voi02I}. Therefore this theorem can be considered the generalization of
this proposition to any order.
\end{remark}
\section{a new proof of the formula for the obstructions}\label{section4}.

\renewcommand{\theequation}
{4.\arabic{equation}} \setcounter{equation}{-1}

\indent In the section we discuss another application of the cohomology groups defined in section \ref{section2}. In \cite{YEH} and \cite{YET}, two  kinds of obstruction formulas related to the jumping phenomenon of Hodge numbers and cohomology groups of tangent sheaf has been studied. In the following, thses formulas are reformulated in the framework of Doubeault type cohomology groups introduced in section \ref{section2} and new proofs are given by using the analytic method.\\
\indent Recall that,  both of  the operators $\partial_{X_n/B_n}$ and $\bar{\partial}_{X_n/B_n}$ operating on the sections of this sheaf   $\mathcal{A} _{X_n/B_n}^{q}(\Omega _{X_n/B_n}^{p,\omega \infty})=\mathcal{A} _{X_n/B_n}^{p}(\bar{\Omega}_{X_n/B_n}^{q,\omega \infty})
$. Furthermore one gets and an operator ${\partial}_{X_n/B_n}:  H_{\bar{\partial}_{X_n/B_n}}^{q}\left( \Omega^{p}_{\sX/B |X_{n}^{\omega}} \right) \longrightarrow H_{\bar{\partial}_{X_n/B_n}}^{q+1}\left( \Omega^{p}_{\sX/B |X_{n}^{\omega}} \right)$ induced by  ${\partial}_{X_n/B_n}: \mathcal{A} _{X_n/B_n}^{q}(\Omega _{X_n/B_n}^{p,\omega \infty}) \longrightarrow \mathcal{A} _{X_n/B_n}^{q}(\Omega _{X_n/B_n}^{p+1,\omega \infty})$ based on the fact that the composition of $\partial_{X_n/B_n}$ and $\bar{\partial}_{X_n/B_n}$ is commutative. Then theorem 3.5  in \cite{YEH} can be reformulated in the following way.

\begin{theorem} \label{omain}
Let $\pi:\mathcal{X}\longrightarrow B$ be a deformation of
$\pi^{-1}(0)=X$, where $X$ is a compact complex manifold. Let
$\pi_{n}:X_{n}\longrightarrow B_{n}$ be the $n$th order deformation of
$X$. For arbitrary $[\alpha]$ belongs to $H^q(X,\Omega^p)$,
suppose we can extend $[\alpha]$ to order $n-1$ in
$H_{\bar{\partial}_{X_n/B_n}}^{q}\left( \Omega^{p}_{\sX/B |X_{n}^{\omega}} \right)$. Denote such element by
$[\alpha_{n-1}]$. The obstruction of the extension of $[\alpha]$
to $n$th order is given by:
$$ o_{n,n-1}(\alpha)=\partial_{X_{n-1}/B_{n-1}} \circ \kappa_{n} \llcorner (\alpha_{n-1})-
\kappa_{n} \llcorner \circ \partial_{X_{n-1}/B_{n-1}}(\alpha_{n-1}),$$
where $\kappa_{n}$ is the $n$th order Kodaira-Spencer map and
$\partial_{X_{n-1}/B_{n-1}}$ is the relative differential operator of the
$n-1$th order deformation.
\end{theorem}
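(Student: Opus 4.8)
The plan is to identify $o_{n,n-1}(\alpha)$ with a connecting homomorphism in the Dolbeault-type model of Section \ref{section2}, and then to rewrite the resulting representative using Theorem \ref{XWT}, Theorem \ref{main3} and a Cartan-type identity for the operator $\langle\varphi(t)|\,\cdot\,\rangle$ of Theorem \ref{XWT}.

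First I would recall what ``obstruction'' means here. The truncation $\mathcal{O}^{\omega}_{X_{n}}\to\mathcal{O}^{\omega}_{X_{n-1}}$ gives a short exact sequence of sheaves supported on $X$
$$0\longrightarrow\Omega^{p}_{X}\otimes_{\mathbb{C}}\bigl((\mathcal{M}^{\omega}_{0})^{n}/(\mathcal{M}^{\omega}_{0})^{n+1}\bigr)\longrightarrow\Omega^{p}_{\mathcal{X}/B|X_{n}^{\omega}}\longrightarrow\Omega^{p}_{\mathcal{X}/B|X_{n-1}^{\omega}}\longrightarrow0,$$
and by the standard description of obstructions $o_{n,n-1}(\alpha)$ is the image of $[\alpha_{n-1}]$ under the connecting homomorphism
$$\delta_{n}:H^{q}\bigl(X_{n-1}^{\omega},\Omega^{p}_{\mathcal{X}/B|X_{n-1}^{\omega}}\bigr)\longrightarrow H^{q+1}(X,\Omega^{p}_{X})\otimes_{\mathbb{C}}\bigl((\mathcal{M}^{\omega}_{0})^{n}/(\mathcal{M}^{\omega}_{0})^{n+1}\bigr)$$
of the associated long exact sequence; by exactness, $[\alpha_{n-1}]$ extends to the next order precisely when $\delta_{n}[\alpha_{n-1}]=0$. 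In the $\Gamma$-acyclic resolution \ref{resolution1} one computes $\delta_{n}[\alpha_{n-1}]$ by picking a $\bar{\partial}_{X_{n-1}/B_{n-1}}$-closed representative $\alpha_{n-1}\in\Gamma(X,\mathcal{A}^{q}_{X_{n-1}/B_{n-1}}(\Omega^{p,\omega\infty}_{X_{n-1}}))$, lifting it arbitrarily to $\hat{\alpha}\in\Gamma(X,\mathcal{A}^{q}_{X_{n}/B_{n}}(\Omega^{p,\omega\infty}_{X_{n}}))$, and noting that $\bar{\partial}_{X_{n}/B_{n}}\hat{\alpha}$ restricts to zero at the previous order; hence it lies in $\Gamma(X,A^{0,q+1}(X,\Omega^{p})\otimes((\mathcal{M}^{\omega}_{0})^{n}/(\mathcal{M}^{\omega}_{0})^{n+1}))$, is $\bar{\partial}$-closed there (the Beltrami correction inside $\bar{\partial}_{X_{n}/B_{n}}$ raises the $\mathcal{M}^{\omega}_{0}$-order, as $\varphi(0)=0$), and represents $o_{n,n-1}(\alpha)$.

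Next I would make this representative concrete. By Theorem \ref{XWT} applied fibrewise, $\rho$ intertwines $\bar{\partial}_{\mathcal{X}/B}$ with $\bar{\partial}-\langle\varphi(t)|\,\cdot\,\rangle$ modulo $(\mathcal{M}^{\omega}_{0})^{n+1}$, and by Theorem \ref{main3} the relevant truncation of $\langle\varphi(t)|\,\cdot\,\rangle$ is exactly contraction $\kappa_{n}\llcorner$ with the $n$-th order Kodaira--Spencer class. Taking the economical lift $\hat{\alpha}=\alpha_{n-1}$ (which has no $(\mathcal{M}^{\omega}_{0})^{n}$-component) and using that $\bar{\partial}$ preserves the $\mathcal{M}^{\omega}_{0}$-order, one obtains
$$o_{n,n-1}(\alpha)=\bigl[\bar{\partial}_{X_{n}/B_{n}}\hat{\alpha}\bigr]=-\bigl[\langle\varphi(t)|\alpha_{n-1}\rangle\bigr],$$
keeping on the right only the $(\mathcal{M}^{\omega}_{0})^{n}/(\mathcal{M}^{\omega}_{0})^{n+1}$-component. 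It then remains to match $-\langle\varphi(t)|\alpha_{n-1}\rangle$ with $\partial_{X_{n-1}/B_{n-1}}\circ\kappa_{n}\llcorner(\alpha_{n-1})-\kappa_{n}\llcorner\circ\partial_{X_{n-1}/B_{n-1}}(\alpha_{n-1})$, for which I would use the Cartan-type formula expressing $\langle\varphi(t)|\,\cdot\,\rangle$ on $\Omega^{p}$-valued forms as $\pm\bigl(\partial_{X_{n-1}/B_{n-1}}\circ(\varphi(t)\llcorner\,\cdot\,)+(\varphi(t)\llcorner\,\cdot\,)\circ\partial_{X_{n-1}/B_{n-1}}\bigr)$ up to terms which, by the Maurer--Cartan equation $\bar{\partial}\varphi(t)=\tfrac{1}{2}[\varphi(t),\varphi(t)]$, are either $\bar{\partial}$-exact or of $\mathcal{M}^{\omega}_{0}$-order exceeding $n$ after being applied to $\alpha_{n-1}$; passing to the $(\mathcal{M}^{\omega}_{0})^{n}$-graded piece and invoking Theorem \ref{main3} once more yields the asserted commutator, the overall sign being fixed by a coordinate computation as in the proof of Theorem \ref{main2}.

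Finally I would check that $o_{n,n-1}(\alpha)$ is well defined: independence of the lift $\hat{\alpha}$ and of the representative $\alpha_{n-1}$ of $[\alpha_{n-1}]$ is the usual property of connecting homomorphisms, while the compatibility of the commutator expression with $\bar{\partial}_{X_{n-1}/B_{n-1}}$-exact changes of $\alpha_{n-1}$ follows from the anticommutation $\partial_{X_{n}/B_{n}}\bar{\partial}_{X_{n}/B_{n}}=-\bar{\partial}_{X_{n}/B_{n}}\partial_{X_{n}/B_{n}}$ of Remark \ref{dulremark} together with, once again, the Maurer--Cartan equation. The main obstacle is the Cartan-type identity of the third step: its analogues for $E=\mathcal{O}_{X}$ (which already underlies Theorem \ref{XWT}) and for $E=T_{X}$ are classical, but here one must track exactly which error terms survive modulo $(\mathcal{M}^{\omega}_{0})^{n+1}$ and modulo $\bar{\partial}$-exact forms when $\varphi(t)$ is expanded to all orders, and reconcile the naive $\partial$ with the deformed operator $\partial_{X_{n-1}/B_{n-1}}$ --- this is precisely where the Maurer--Cartan equation must be used, and where the higher-order ($n\ge2$) statement genuinely goes beyond the first-order one.
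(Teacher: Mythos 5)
Your setup of the obstruction as the connecting homomorphism of the order-truncation sequence, and your choice of tools (Theorem \ref{XWT}, a Cartan-type expansion of the contraction, Theorem \ref{main3}), do match the paper's strategy, but there is a genuine gap at the central step. You claim that, after passing to the $(\mathcal{M}_0^{\omega})^n$-graded piece, the obstruction is $-[\langle\varphi(t)|\alpha_{n-1}\rangle]$ and that ``the relevant truncation of $\langle\varphi(t)|\cdot\rangle$ is exactly contraction $\kappa_n\llcorner$''. That identification is not correct as stated: by Theorem \ref{main3} the class $\kappa_n(u)$ is represented by $r_{n-1}(\rho\, d_{\tilde u}\varphi(t))$, i.e.\ by the \emph{base-direction derivative} of the Beltrami differential, not by a graded piece of $\varphi(t)$ itself. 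To convert your graded representative into the directional form appearing in the theorem one must apply $d_{\tilde u}$ and use the Leibniz rule on $d_{\tilde u}\bigl((\bar\partial-\mathcal{L}^{1,0}_{\varphi(t)})\rho^{-1}\tilde\alpha\bigr)$; this produces, besides the term $\bigl(-\rho d_{\tilde u}\varphi(t)\bigr)\llcorner(\rho\partial\rho^{-1}\tilde\alpha)+\rho\partial\rho^{-1}\bigl(\rho d_{\tilde u}\varphi(t)\bigr)\llcorner\tilde\alpha$ that becomes the asserted commutator, a second term $\rho\bigl(\bar\partial-\mathcal{L}^{1,0}_{\varphi(t)}\bigr)d_{\tilde u}\rho^{-1}\tilde\alpha=\bar\partial_{\sX/B}\,\rho\, d_{\tilde u}\rho^{-1}\tilde\alpha$, and the whole point of the paper's computation is that this second term is exact after applying $r_{n-1}$ and therefore vanishes in cohomology. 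This term does not appear anywhere in your argument, and without it the passage from $-\langle\varphi(t)|\alpha_{n-1}\rangle$ to $\partial_{X_{n-1}/B_{n-1}}\circ\kappa_n\llcorner(\alpha_{n-1})-\kappa_n\llcorner\circ\partial_{X_{n-1}/B_{n-1}}(\alpha_{n-1})$ does not go through.

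A secondary point: you single out the Cartan-type identity as the main obstacle and propose to control its ``error terms'' with the Maurer--Cartan equation. In the paper this identity is exact, $\mathcal{L}^{1,0}_{\varphi(t)}=\varphi(t)\llcorner\partial-\partial\llcorner\varphi(t)$ on $\Omega^p$-valued $(0,q)$-forms, and is imported from \cite{XW} together with Theorem \ref{XWT}; no Maurer--Cartan correction is needed there, and the Maurer--Cartan equation is used only to verify closedness of the Kodaira--Spencer representative (Lemma \ref{closed lemma}). So the step you identify as the hard one is in fact a citation, while the step you pass over in one clause --- the exactness of the Leibniz remainder --- is where the proof actually lives.
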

\begin{remark}
Note that the symbol $\kappa_{n} \llcorner$ has different meaning from the one on  theorem 3.5  in \cite{YEH}, while the latter operates on the $\check{C}$ech cohomology class and the former operates on Dolbeault type cohomology class. It is precisely this difference that causes us to change the plus sign to the minus sign in the formula.
\end{remark}
\begin{proof}
Let $\tilde{\alpha}$ be a section of $\Gamma \left( \mathcal{X} ,\mathcal{A} _{\mathcal{X} /B}^{q}\left( \Omega _{\mathcal{X} /B}^{p} \right) \right)
$ such that
$r_{n-1}\left( \tilde{\alpha} \right) =\alpha _{n-1}$. For any section
$u\in \mathcal{T} _{B_{n}^{\omega}|B_{n-1}^{\omega}}$ , let $\tilde{u}$ is a section on $\mathcal{T} _{B^{\omega}}$
such that $r_{n-1}\tilde{u}=u$.  The obstruction $o_{n,n-1}(\alpha)$ in the direct $u$ is given by $[r_{n-1}(\rho d_{\tilde{u}} \rho^{-1}\bar{\partial}_{\mathcal{X}/B}\tilde{\alpha})]. $
By theorem 2.9 of \cite{XW}
\begin{align*}
[r_{n-1}(\rho d_{\tilde{u}}\rho ^{-1}\bar{\partial}_{\mathcal{X} /B}\tilde{\alpha})]&=[r_{n-1}(\rho d_{\tilde{u}}\rho ^{-1}\bar{\partial}_{\mathcal{X} /B}\rho \rho ^{-1}\tilde{\alpha})]&
\\
&=[r_{n-1}(\rho d_{\tilde{u}}\left( \bar{\partial}-\mathcal{L} _{\varphi \left( t \right)}^{1,0} \right) \rho ^{-1}\tilde{\alpha})]&
\\
&=[r_{n-1}(\rho d_{\tilde{u}}\left( \bar{\partial}-\varphi \left( t \right) \llcorner \partial +\partial \llcorner \varphi \left( t \right) \right) \rho ^{-1}\tilde{\alpha})]&
\\
&=\left[ r_{n-1}\left( \rho \left( -d_{\tilde{u}}\varphi \left( t \right) \llcorner \partial +\partial d_{\tilde{u}}\varphi \left( t \right) \llcorner \right) \rho ^{-1}\tilde{\alpha} \right. \right.&
\\
&\,\, +\left. \left. \rho \left( \bar{\partial}-\varphi \left( t \right) \llcorner \partial +\partial \llcorner \varphi \left( t \right) \right) d_{\tilde{u}}\rho ^{-1}\tilde{\alpha} \right) \right]&
\\
&=\left[ r_{n-1}\left( -\rho d_{\tilde{u}}\varphi \left( t \right) \llcorner \rho ^{-1}\rho \partial \rho ^{-1}\tilde{\alpha}+\rho \partial \rho ^{-1}\rho d_{\tilde{u}}\varphi \left( t \right) \llcorner \rho ^{-1}\tilde{\alpha} \right. \right.&
\\
&\,\, +\left. \left. \rho \left( \bar{\partial}-\varphi \left( t \right) \llcorner \partial +\partial \llcorner \varphi \left( t \right) \right) \rho ^{-1}\rho d_{\tilde{u}}\rho ^{-1}\tilde{\alpha} \right) \right]&
\\
&=\left[ r_{n-1}\left( \left( -\rho d_{\tilde{u}}\varphi \left( t \right) \right) \llcorner \left( \rho \partial \rho ^{-1}\tilde{\alpha} \right) +\rho \partial \rho ^{-1}\left( \rho d_{\tilde{u}}\varphi \left( t \right) \right) \llcorner \tilde{\alpha} \right. \right.
&\\
&\,\, +\left. \left. \bar{\partial}_{\mathcal{X} /B}\rho d_{\tilde{u}}\rho ^{-1}\tilde{\alpha} \right) \right]&
\\
&=\left[ r_{n-1}\left( \left( -\rho d_{\tilde{u}}\varphi \left( t \right) \right) \llcorner \left( \rho \partial \rho ^{-1}\tilde{\alpha} \right) +\rho \partial \rho ^{-1}\left( \rho d_{\tilde{u}}\varphi \left( t \right) \right) \llcorner \tilde{\alpha} \right) \right]&
\\
&+\left[ r_{n-1}\left( \bar{\partial}_{\mathcal{X} /B}\rho d_{\tilde{u}}\rho ^{-1}\tilde{\alpha} \right) \right]&\\
&=\left[ r_{n-1}\left( \left( -\rho d_{\tilde{u}}\varphi \left( t \right) \right) \right)\llcorner  \left( \partial _{\mathcal{X} /B}\tilde{\alpha} \right) +r_{n-1}\left( \partial _{\mathcal{X} /B}\left( \rho d_{\tilde{u}}\varphi \left( t \right) \right) \llcorner  \tilde{\alpha} \right) \right]
&\\
&+\left[ \bar{\partial}_{X_n/B_n}r_{n-1}\left( \rho d_{\tilde{u}}\rho ^{-1}\tilde{\alpha} \right) \right]
&\\
&=\partial_{X_{n-1}/B_{n-1}} \circ \kappa_{n}(u) \llcorner (\alpha_{n-1})-
\kappa_{n}(u) \llcorner \circ \partial_{X_{n-1}/B_{n-1}}(\alpha_{n-1})&
\end{align*}
Thus the formula in the theorem has been proved.

\end{proof}
In the case of tangent sheaf,  theorem 3.4  in \cite{YET} can be reformulated in the following way.

\begin{theorem} \label{omain2}
 Let $\pi:\mathcal{X}\longrightarrow B$ be a deformation of
$\pi^{-1}(0)=X$, where $X$ is a compact complex manifold. Let
$\pi_{n}:X_{n}\longrightarrow B_{n}$ be the $n$th order deformation of
$X$. For arbitrary $[\alpha]$ belongs to $H^q(X,\mathcal{T}_{X})$,
suppose we can extend $[\alpha]$ to order $n-1$ in
$H_{\bar{\partial}_{X_n/B_n}}^{p}\left( \mathcal{T} _{\mathcal{X} /B|X_{n}^{\omega}} \right) .$ Denote such element by
$[\alpha_{n-1}]$. The obstruction of the extension of
$[\alpha]$ to $n$th order is given by:
$$ o_{n,n-1}(\alpha)=[\kappa_{n}, \alpha_{n-1}]_{rel,n-1},$$
where $\kappa_{n}$ is the $n$th order Kodaira-Spencer class and
$[\cdot, \cdot]_{rel,n-1}$ is the Lie bracket induced from the relative tangent sheaf of the
$n-1$th order deformation.
\end{theorem}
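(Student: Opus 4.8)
The plan is to mimic the proof of Theorem \ref{omain} almost verbatim, replacing the contraction $\llcorner$ of relative differential forms and the operator $\mathcal{L}^{1,0}_{\varphi(t)}=\varphi(t)\llcorner\partial-\partial\llcorner\varphi(t)$ acting on $\mathcal{A}^{\cdot}_{\mathcal{X}/B}(\Omega^{p}_{\mathcal{X}/B})$ by, respectively, the Lie bracket of relative vector-valued $(0,\cdot)$-forms and the operator $[\varphi(t),\cdot]$, which is what Theorem \ref{XWT} specializes to when the holomorphic tensor bundle $E$ is taken to be $\mathcal{T}_X$. Concretely, I would first fix a representative $\tilde{\alpha}\in\Gamma(\mathcal{X},\mathcal{A}^{q}_{\mathcal{X}/B}(\mathcal{T}_{\mathcal{X}/B}))$ with $r_{n-1}(\tilde{\alpha})=\alpha_{n-1}$, and for a section $u\in\mathcal{T}_{B^{\omega}_{n}|B^{\omega}_{n-1}}$ choose a lift $\tilde{u}$ of $u$ to $\mathcal{T}_{B^{\omega}}$. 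As in the proof of Theorem \ref{omain}, the hypothesis that $\alpha_{n-1}$ is already $\bar{\partial}_{X_{n-1}/B_{n-1}}$-closed forces $\bar{\partial}_{\mathcal{X}/B}\tilde{\alpha}$ to lie in the $n$-th power of the ideal $i^{-1}\mathcal{M}_{0}^{\omega\infty}$, so that the value of the obstruction $o_{n,n-1}(\alpha)$ in the direction $u$ is the well-defined class $[r_{n-1}(\rho d_{\tilde{u}}\rho^{-1}\bar{\partial}_{\mathcal{X}/B}\tilde{\alpha})]$, independent of the choices of $\tilde{\alpha}$ and $\tilde{u}$.

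Next I would run the same chain of identities as in the $\Omega^{p}$ case. Applying Theorem \ref{XWT} with $E=\mathcal{T}_X$ gives $\rho^{-1}\bar{\partial}_{\mathcal{X}/B}\rho=\bar{\partial}-[\varphi(t),\cdot]$ on $A^{0,\cdot}(X,\mathcal{T}_X)$ (in the relative sense), hence
\[
\rho d_{\tilde{u}}\rho^{-1}\bar{\partial}_{\mathcal{X}/B}\tilde{\alpha}=\rho\, d_{\tilde{u}}\big(\bar{\partial}-[\varphi(t),\cdot]\big)\rho^{-1}\tilde{\alpha}.
\]
Since $d_{\tilde{u}}$ differentiates only base parameters it commutes with the fibrewise $\bar{\partial}$ and acts as a derivation on the bracket; expanding $d_{\tilde{u}}$, regrouping, and using that $\rho$ intertwines the Lie bracket of $\mathcal{T}_X$-valued forms on $X$ with the relative bracket on $X_t$ yields
\[
\rho d_{\tilde{u}}\rho^{-1}\bar{\partial}_{\mathcal{X}/B}\tilde{\alpha}=\bar{\partial}_{\mathcal{X}/B}\big(\rho d_{\tilde{u}}\rho^{-1}\tilde{\alpha}\big)-\big[\rho d_{\tilde{u}}\varphi(t),\,\tilde{\alpha}\big]_{rel}.
\]
Applying $r_{n-1}$ and passing to cohomology, the first term becomes $\bar{\partial}_{X_{n-1}/B_{n-1}}r_{n-1}(\rho d_{\tilde{u}}\rho^{-1}\tilde{\alpha})$, hence exact and zero, while by the description of $\kappa_n$ in Theorem \ref{main3} the second term becomes $[r_{n-1}(\rho d_{\tilde{u}}\varphi(t)),\alpha_{n-1}]_{rel,n-1}=[\kappa_n(u),\alpha_{n-1}]_{rel,n-1}$. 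Letting $u$ vary over $\mathcal{T}_{B^{\omega}_{n}|B^{\omega}_{n-1}}$ gives $o_{n,n-1}(\alpha)=[\kappa_n,\alpha_{n-1}]_{rel,n-1}$, with the overall sign absorbed into the convention for the relative bracket — a point to be flagged in a remark, exactly as the plus/minus discrepancy is flagged after Theorem \ref{omain}.

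The one genuinely new point, and the main obstacle, is the input from Theorem \ref{XWT} in the tangent-bundle case: one must identify the abstract operator $\langle\varphi(t)|\cdot$ of Theorem \ref{XWT} for $E=\mathcal{T}_X$ with the Lie bracket $[\varphi(t),\cdot]$ of vector-valued $(0,\cdot)$-forms (up to sign), and check the compatibility $\rho[\sigma,\tau]=[\rho\sigma,\rho\tau]_{rel}$. The former is a local computation using the explicit frame change $\partial/\partial z^{\lambda}_{j}\mapsto\sum_{\mu}(\partial\zeta^{\mu}_{j}/\partial z^{\lambda}_{j})\,\partial/\partial\zeta^{\mu}_{j}$ defining $\rho$ together with the local expression $\varphi(t)=\sum_{\alpha,\lambda}A^{\lambda}_{j\alpha}\bar{\partial}\zeta^{\alpha}_{j}\,\partial/\partial z^{\lambda}_{j}$ used in the proof of Theorem \ref{main2}; the latter follows from the naturality of the Lie bracket under the biholomorphisms identifying the relative tangent bundle of $X_t$. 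Granting these, the remaining work — tracking the ideal filtration through $r_{n-1}$ (using the $\pi^{-1}\mathcal{C}^{\omega}_{B}$-linearity of $\bar{\partial}_{\mathcal{X}/B}$ already exploited in Section \ref{section2}), checking that the resulting class is $\bar{\partial}_{X_{n-1}/B_{n-1}}$-closed by Lemma \ref{closed lemma} and the Jacobi identity, and pinning down signs — is routine, and the claimed formula follows.
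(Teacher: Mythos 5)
Your proposal follows essentially the same route as the paper's own proof: both fix a lift $\tilde{\alpha}$ with $r_{n-1}(\tilde{\alpha})=\alpha_{n-1}$, write the obstruction as $[r_{n-1}(\rho d_{\tilde{u}}\rho^{-1}\bar{\partial}_{\mathcal{X}/B}\tilde{\alpha})]$, apply Theorem \ref{XWT} with $E=\mathcal{T}_X$ so that $\mathcal{L}^{1,0}_{\varphi(t)}$ becomes $[\varphi(t),\cdot]$, expand $d_{\tilde{u}}$ as a derivation, use that $\rho$ intertwines the brackets, and discard the $\bar{\partial}_{X_{n-1}/B_{n-1}}$-exact term. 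The extra points you flag (well-definedness of the obstruction, the identification of $\langle\varphi(t)|\cdot$ with the bracket, the compatibility $\rho[\sigma,\tau]=[\rho\sigma,\rho\tau]_{rel}$, and the sign convention) are used implicitly but not spelled out in the paper, so your write-up is a correct and slightly more careful version of the same argument.
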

\begin{proof}
The proof is again given by direct calculation. Let $\tilde{\alpha}$ be a section of $\Gamma \left( \mathcal{X} ,\mathcal{A} _{\mathcal{X} /B}^{q}\left( \Omega _{\mathcal{X} /B}^{p} \right) \right)
$ such that
$r_{n-1}\left( \tilde{\alpha} \right) =\alpha _{n-1}$. For any section
$u\in \mathcal{T} _{B_{n}^{\omega}|B_{n-1}^{\omega}}$ , let $\tilde{u}$ is a section on $\mathcal{T} _{B^{\omega}}$
such that $r_{n-1}\tilde{u}=u$.  The obstruction $o_{n,n-1}(\alpha)$ in the direct $u$ is given by $[r_{n-1}(\rho d_{\tilde{u}} \rho^{-1}\bar{\partial}_{\mathcal{X}/B}\tilde{\alpha})]. $
By theorem 2.9 of \cite{XW}
\begin{align*}
[r_{n-1}(\rho d_{\tilde{u}}\rho ^{-1}\bar{\partial}_{\mathcal{X} /B}\tilde{\alpha})]&=[r_{n-1}(\rho d_{\tilde{u}}\rho ^{-1}\bar{\partial}_{\mathcal{X} /B}\rho \rho ^{-1}\tilde{\alpha})]&
\\
&=[r_{n-1}(\rho d_{\tilde{u}}\left( \bar{\partial}-\mathcal{L} _{\varphi \left( t \right)}^{1,0} \right) \rho ^{-1}\tilde{\alpha})]
&\\
&=[r_{n-1}(\rho d_{\tilde{u}}\left( \bar{\partial}-\left[ \varphi \left( t \right) ,\cdot \right] \right) \rho ^{-1}\tilde{\alpha})]
&\\
&=[r_{n-1}(\rho \left[ d_{\tilde{u}}\varphi \left( t \right) ,\cdot \right] \rho ^{-1}\tilde{\alpha}+\rho \left( \bar{\partial}-\left[ \varphi \left( t \right) ,\cdot \right] \right) d_{\tilde{u}}\rho ^{-1}\tilde{\alpha})]
&\\
&=[r_{n-1}(\left[ \rho d_{\tilde{u}}\varphi \left( t \right) ,\cdot \right] \tilde{\alpha}+\rho \left( \bar{\partial}-\left[ \varphi \left( t \right) ,\cdot \right] \right) \rho ^{-1}\rho d_{\tilde{u}}\rho ^{-1}\tilde{\alpha})]
&\\
&=[r_{n-1}\left( \left[ \rho d_{\tilde{u}}\varphi \left( t \right) ,\tilde{\alpha} \right] \right) ]+\left[ r_{n-1}(\bar{\partial}_{\mathcal{X} /B}\rho d_{\tilde{u}}\rho ^{-1}\tilde{\alpha}) \right]
&\\
&=[\kappa _n\left( u \right) ,\alpha _{n-1}]_{rel,n-1}.&
\end{align*}
\end{proof}


\bibliographystyle{amsplain}
\bibliography{paper_Ye}

\end{document}